\theoremstyle{definition}
\newtheorem{definition}{Definition}
\newtheorem{example}[definition]{Example}
\newtheorem{remark}[definition]{Remark}
\theoremstyle{plain}
\newtheorem{theorem}[definition]{Theorem}
\newtheorem*{theorem*}{Main Theorem}
\newtheorem{proposition}[definition]{Proposition}
\newtheorem{lemma}[definition]{Lemma}
\newtheorem{corollary}[definition]{Corollary}
\DeclareMathOperator{\rank}{rank}
\def\PP{\mathbb{P}}
\def\CC{\mathbb{C}}
\def\a0n{a_0,\ldots,a_n}
\def\x0n{x_0,\ldots,x_n}
\def\b0n{b_0,\ldots,b_n}
\def\y0n{y_0,\ldots,y_n}
\def\p0n{p_0,\ldots,p_n}
\let\hat\widehat
\begin{document}

\title{A matrixwise approach to unexpected hypersurfaces}
\author{Marcin~Dumnicki, {\L}ucja~Farnik, Brian~Harbourne, \\
	Grzegorz~Malara, Justyna~Szpond, Halszka~Tutaj-Gasi\'nska}
\renewcommand{\shortauthors}{Dumnicki et al.}
\date{}
\begin{abstract}
	The aim of this note is to give a generalization of some results concerning unexpected hypersurfaces. 
	Unexpected hypersurfaces occur when the actual dimension of the space of forms satisfying certain vanishing 
	data is positive and the imposed vanishing conditions are not independent.
	The first instance studied were unexpected curves
	in the paper by Cook II, Harbourne,  Migliore,  Nagel. Unexpected hypersurfaces were
	then investigated by Bauer, Malara, Szpond and Szemberg, followed by Harbourne, Migliore, Nagel and Teitler
	who introduced the notion of BMSS duality and showed it holds in some cases
	(such as certain plane curves and, in higher dimensions, for certain cones). 
	They ask to what extent such a duality holds in general.
	In this paper, working over a field of characteristic zero, we study hypersurfaces
	in $\PP^n\times\PP^n$ defined by determinants. We apply our results to unexpected hypersurfaces
	in the case that the actual dimension is 1 (i.e., there is a unique unexpected hypersurface).
	In this case, we show that a version of BMSS duality always holds, as a consequence 
	of fundamental properties of determinants.

\end{abstract}

\subjclass[2010]{14C20, 15A06} 
\keywords{unexpected hypersurface, BMSS duality, determinant, derivative}

\maketitle

\parskip=.2cm

\section{Introduction}

Motivated by the results of \cite{DIV} concerning the failure of Lefschetz properties, the authors  of \cite{C-H-M-N} introduced the notion of an {\it unexpected} curve, i.e., a plane curve of degree $d$ passing through a given set of points $Z$ and having a general point $B$ of multiplicity $m=d-1$, such that the conditions imposed by $mB$ on the forms of degree $d$ vanishing on $Z$ are not independent. Since then the notion has been generalized to unexpected hypersurfaces and
quite a few papers on the subject have appeared; see e.g. \cite{DiMMO}, \cite{DHRSTG}, \cite{FGST}, \cite{H-M-TG}, \cite{Szpond2018Togliatti}, \cite{Szpond2018}.
Of particular relevance to us are the papers \cite{BMSS} and \cite{H-M-N-T}, from which comes the notion of BMSS duality.

What we will refer to as BMSS duality is as follows. Given integers $d\geq m>0$
and points $P_1,\ldots,P_r$ in $\PP^n$
and a general point $B=(b_0,\dots,b_n)\in\PP^n$, assume that there is a unique hypersurface $H_B$ of degree $d$
containing each point $P_i$ and having multiplicity $m$ at $B$.
Then there is a bihomogeneous form $F(a_0,\dots,a_n,x_0,\dots,x_n)\in \CC[\a0n][\x0n]$ 
such that $F(B,\x0n)=0$ defines the hypersurface $H_B$.
One version of BMSS duality is the fact that $F(B,\x0n)$ and $F(\x0n,B)$ have the same tangent cone
at $B$. This is proved in special cases in \cite{H-M-N-T}; 
proving this in general is one of our main results, see Corollary \ref{BMSSCor} and
Remark \ref{remTC}. In the special cases studied
in \cite{H-M-N-T} even more can be said, about the bidegree of $F(a_0,\dots,a_n,x_0,\dots,x_n)$, as we discuss in more detail below.
We partially address this too, see Theorem \ref{k-mult} and Example \ref{exb3r}.

In more detail, we now  discuss one of these special cases, indeed, the example 
for which BMSS duality was first recognized.
It comes from the arrangement of lines classically denoted by $B_3$ (or $A(9,1)$ in Gr\"unbaum's notation, \cite{Grunbaum}),
given by the linear factors of the polynomial
$$
f=xyz(x+y)(x-y)(x+z)(x-z)(y+z)(y-z).
$$
These factors correspond dually to the points of a subscheme $Z\subset \PP^2$ (see Figure \ref{Fig1}):
$$\begin{array}{lll}
P_1=(1:0:0),& P_2=(0:1:0),& P_3=(0:0:1),\\
P_4=(1:1:0),& P_5=(1:-1:0),& P_6=(1:0:1),\\
P_7=(1:0:-1),& P_8=(0:1:1),& P_9=(0:1:-1).
\end{array}$$
It is not difficult to observe  that the points in $Z$
impose independent conditions on quartics in $\PP^2$, see \cite{C-H-M-N}.

Take  the linear space
of  quartic curves in $\PP^2$ vanishing at $P_1,\dots,P_9$.
As the points impose  independent conditions on the space of quartics,
the dimension of this subspace is $6$, 
but a triple point typically imposes 6 conditions
so we expect that a curve in the system cannot have a (general) triple point.
Nonetheless, there exists
for any choice of an additional point $B=(a:b:c)$
an unexpected quartic in this subspace with a triple point at $B$.
As observed in \cite{BMSS}, 
this quartic can be defined by a bihomogeneous form $G\in \mathbb{C}[a,b,c,x,y,z]$
which can be viewed in two ways, first as a quartic in the variables $x,y,z$ with coefficients 
parameterized by the triple point $B=(a:b:c)$, so as
\begin{align*}
Q_B(x,y,z) =  3a(b^2-c^2) x^2yz +3b(c^2-a^2) xy^2z 
+3c(a^2-b^2) xyz^2 \\
+a^3 y^3z -a^3 yz^3 +b^3 xz^3
-b^3 x^3z + 
c^3 x^3y -c^3 xy^3
\in \mathbb{C}[a,b,c][x,y,z],
\end{align*}
and second, as a cubic in the variables $a,b,c$ with coefficients parameterized by $S=(x,y,z)$, namely as
\begin{align*}
Q_S(a,b,c)=
yz(y^2-z^2) a^3 +xz(z^2-x^2) b^3 +xy(x^2-y^2) c^3+ 
3x^2yzab^2\\  
-3xy^2z a^2b + 3xyz^2 a^2c -3x^2yz ac^2 
+3xy^2z bc^2- 3xyz^2 b^2c \in \mathbb{C}[x,y,z][a,b,c],
\end{align*}
and these cubic curves have a triple point at $S$ (consequently it is composed of three lines).

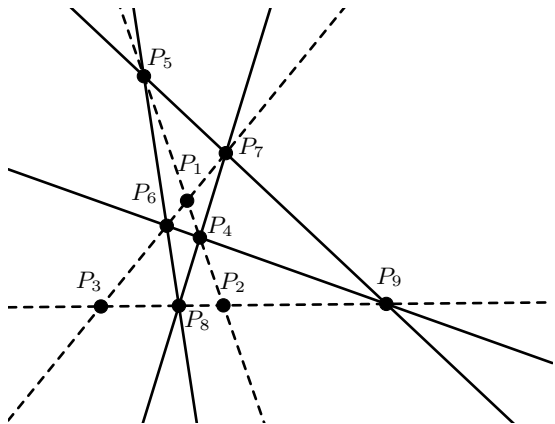
\begin{figure}
\begin{tikzpicture}[line cap=round,line join=round,>=triangle 45,x=0.75cm,y=0.75cm]
\clip(-2.570516224609383,-0.95182393413239) rectangle (6.973081328879615,6.353964537848843);
\draw [line width=1.pt,domain=-2.570516224609383:6.973081328879615] plot(\x,{(-0.8366430877314829--1.4150851544828515*\x)/-0.21390822102647752});
\draw [line width=1.pt,domain=-2.570516224609383:6.973081328879615] plot(\x,{(-0.23561695137758343--2.698534480641717*\x)/0.8227239270249136});
\draw [line width=1.pt,domain=-2.570516224609383:6.973081328879615] plot(\x,{(-9.993809155662227--1.382176197401855*\x)/-3.850347978476596});
\draw [line width=1.pt,domain=-2.570516224609383:6.973081328879615] plot(\x,{(-14.025854692755985--2.6656255235607205*\x)/-2.8137158304252043});
\draw [dashed,line width=1.pt,domain=-2.570516224609383:6.973081328879615] plot(\x,{(--4.527702289724097-2.8529944648199077*\x)/0.9813788094429696});
\draw [dashed,line width=1.pt,domain=-2.570516224609383:6.973081328879615] plot(\x,{(--2.342489261857367--1.2834493261588653*\x)/1.0366321480513911});
\draw [dashed,line width=1.pt,domain=-2.570516224609383:6.973081328879615] plot(\x,{(--4.004254655526826--0.032908957080996615*\x)/3.6364397574501184});
\begin{scriptsize}
\draw [fill=black] (0.42419886976130233,1.104985883429894) circle (2.5pt);
\draw[color=black] (0.75,0.85) node {$P_8$};
\draw [fill=black] (0.2102906487348248,2.5200710379127456) circle (2.5pt);
\draw[color=black] (-0.17638959696688458,3.095977786830185) node {$P_6$};
\draw [fill=black] (1.246922796786216,3.803520364071611) circle (2.5pt);
\draw[color=black] (1.7,3.9022472353146003) node {$P_7$};
\draw [fill=black] (4.0606386272114205,1.1378948405108906) circle (2.5pt);
\draw[color=black] (4.118229302103164,1.6150747181853404) node {$P_9$};
\draw [fill=black] (-0.18941214141179333,5.164258726574989) circle (2.5pt);
\draw[color=black] (.1,5.481877175202435) node {$P_5$};
\draw [fill=black] (0.7919666680311763,2.311264261755081) circle (2.5pt);
\draw[color=black] (1.15,2.5036165593722473) node {$P_4$};
\draw [fill=black] (0.5678588494328471,2.9627745244912496) circle (2.5pt);
\draw[color=black] (0.6298798515175307,3.6060666215856316) node {$P_1$};
\draw [fill=black] (-0.9426536617907761,1.092616177262002) circle (2.5pt);
\draw[color=black] (-1.1472038308562826,1.5328023254828491) node {$P_3$};
\draw [fill=black] (1.204475775283025,1.1120472128916288) circle (2.5pt);
\draw[color=black] (1.370331385839953,1.5163478469423508) node {$P_2$};
\end{scriptsize}
\end{tikzpicture}
\caption{Collinearities of the points $P_i$ of $Z\subset\PP^2$, with coordinates axes (dashed).}
\label{Fig1}
\end{figure}
The form $G$ has bidegree $(3,4)$ and defines a hypersurface in $\PP^2\times \PP^2$.
The fibers with respect to the first projection are quartics $G(B,\cdot)=Q_B(\cdot)=0$ with a triple point at $B$,
while the fibers with respect to the second projection are cubics $G(\cdot,S)=Q_S(\cdot)=0$ with a triple point at $S$.
The connection between $G(\cdot,S)$ and $G(B,\cdot)$ is that they have the same tangent cone at $B=S$. 

More generally, consider
sets of points $Z$ of $\PP^2$ such that the set of lines dual to $Z$ is what is known as a free arrangement,
with the assumption that $Z$ admits a unique, irreducible unexpected curve of degree $d$ with a general point $P=(a,b,c)$
of multiplicity $m=d-1$. In this case, one of the main results of \cite{H-M-N-T} 
is a proof of what they call BMSS duality, namely
that there is a bihomogeneous form $G(a,b,c,x,y,z)$
of bidegree $(m,d)$ such that $G(B,\cdot)$ defines the degree $d$ unexpected curves with multiplicity $m$ at $B$
and $G(\cdot,S)$ defines curves of degree $m$ of multiplicity $m$ at $S$, and that
the tangent cones of $G(\cdot,S)$ and $G(B,\cdot)$ at $B=S$ are the same.
The example of the unexpected plane quartics from \cite{BMSS} is a special case of this.

Here, at the cost of having a less explicit result for the bidegree of $G$, 
we show that a version of this result (see Corollary \ref{BMSSCor} and Remark \ref{remTC}) holds in all dimensions
and without the assumption that $Z$ comes from a free hyperplane arrangement.
Our idea is to work with the largest subset of $Z$ such that what we get is expected.
For example, consider the situation above of the unexpected plane quartics 
through a set $Z$ of nine points, but replace $Z$ with any eight out of the nine points. 
Then we still get quartic curves but they are not unexpected, since the expected dimension of the system 
of quartics passing through these eight points and through one general point $B$ 
with multiplicity three is one, which is exactly what we get.
(What is unexpected is that there is a ninth point, this being the point which we excluded from the original nine points of $Z$,
which lies on every such quartic. For a general choice of eight points, there is still a unique quartic
through the eight points with a general triple point, but there is no ninth point lying on every such quartic.)

This idea is the starting point for our paper. More generally, 
consider a set $Z$ of points in $\PP^n$ (where $\PP^n$ has coordinates $(x_0,\ldots,x_n)$) such that $Z$ and a general point 
$B=(a_0,\ldots,a_n)$ with multiplicity $m$ impose independent conditions on forms of degree $d$,
with the affine dimension of the system 
of forms of degree $d$ vanishing on $Z$ and on
$mB$ being 1, hence
$\binom{d+n}{n}=\binom{m+n-1}{n}+r+1$.
Then the equation of the hypersurface is a determinant $F$ of a suitable interpolation matrix $M$ 
involving two groups of coordinates, $(a_0,\ldots,a_n)$, $(x_0,\ldots,x_n)$. As we will see, when $F$ 
is not identically zero it is a bihomogeneous polynomial (of bidegree $(\binom{m+n-1}{n} (d-m+1),d)$)
and so defines a variety in $\PP^n\times\PP^n$ with two 
natural projections. We investigate the properties of this variety and we show in particular that 
the fiber over a fixed 
point $S$, defined by the equation $F(\a0n,S)=0$, is a hypersurface in $\PP^n$ of degree $\binom{m+n-1}{n} (d-m+1)$
which vanishes at both $Z$ and $S$ with multiplicity at least $m$ (Theorem \ref{m-at-Z-and_Pv1}).  We also
prove that the fiber over a given point $B\in \PP^n$, defined by $F(B,\cdot)=0$, has the same tangent cone 
at $S=B$ as does $F(\a0n,S)=0$ 
since $F(\cdot,S)$ and $F(B,\cdot)$ have the same partial derivatives of order $m$ at $S=B$ (Theorem \ref{tangent}).

To relate this to our discussion above of BMSS duality for the unexpected plane quartics,
note that $F$ factors as $F=HG$, where $H\in \mathbb{C}[a_0,\dots,a_n]$
is a form (possibly a constant) describing the codimension $1$ part of the locus of points $B$ such that $F(B,\cdot)\equiv0$.
In the case of the unexpected plane quartics discussed above,
we have $n=2$, $d=4$ and $m=3$ so $F$ has bidegree $(\binom{m+n-1}{n} (d-m+1),d)=(12,4)$,
but $H=ab^3c^3(a+b-c)(a-b+c)$ (see Example \ref{exb3r}),
so in this case $H$ has bidegree $(9,0)$ and $G$ has bidegree $(3,4)$ 
which gives the bidegree of $F$ to be $(12,4)$, as asserted.
While we do not have an explicit expression for the bidegree of $H$ (or for $G$),
our results show that BMSS duality holds for $F$ (and hence for $G$, whatever its bidegree is).
 
\section{Definitions and first properties}
Assume that we have a set $Z=\{P_1, \ldots ,P_r\}$ of $r$  distinct points in projective space $\PP^n$ over any field $\mathbb{F}$ of characteristic $0$. Let $d$ and $m$ be positive integers such that the following equality is satisfied:
\begin{equation}\label{dim}
 \binom{d+n}{n}=\binom{m+n-1}{n}+r+1.
 \end{equation}

 In $\PP^n\times \PP^n$ define a variety 
$$ V:=\{(B,P)\in \PP^n\times\PP^n : 
   [I(mB)\cap I(P)\cap I(Z)]_d\neq 0\},$$
where $I(mB)\cap I(P)\cap I(Z)$ is the ideal generated by 
homogeneous polynomials vanishing on $B$ with multiplicity $m$ and also vanishing on $Z$ and $P$.

Let us introduce the following notation. 	Fix the vector $w$ of all the monomials (in any order) in $(\a0n)$ of degree $d$, for example
$$w:=(a_{0}^d, a_0^{d-1}a_1, a_0^{d-1}a_2,\ldots,a_{n-1}a_n^{d-1},a_n^{d}).$$

 Assume that the points $P_i\in Z$ have coordinates $(p_{i0},\ldots,p_{in})$, for $i=1,\ldots,r$. 
 Define a matrix $M$, which we call an interpolation matrix, as follows. The first $\binom{m+n-1}{n} $ rows are all the partial derivatives of $w$ of order $m-1$.
The next $r$ rows are $w(p_{10},\ldots,p_{1n}),\ldots,$  $w(p_{r0},\ldots,p_{rn})$ and the last row is $w(\x0n)$, 
in the variables $x_i$. Hence the  matrix $M$ is a square matrix of the form: 
$$
M=\left[\begin{array}{c}
\frac{\partial^{m-1}}{\partial a_0^{m-1}}w \\
\frac{\partial^{m-1}}{\partial a_0^{m-2}\partial a_1}w \\
\vdots\\
\frac{\partial^{m-1}}{\partial a_n^{m-1}}w \\
w(P_1)\\
\vdots\\
w(P_r)\\
w(\x0n)
\end{array}\right]=
$$
{\tiny	$$
	\left[\begin{array}{cccccc}
	\frac{\partial^{m-1}}{\partial a_0^{m-1} }(a_0^d)&	\frac{\partial^{m-1}}{\partial a_0^{m-1} }(a_0^{d-1}a_1) &	\frac{\partial^{m-1}}{\partial a_0^{m-1} }(a_0^{d-1}a_2) &
	\dots&
	\frac{\partial^{m-1}}{\partial a_0^{m-1} }(a_{n-1}a_n^{d-1}) &
	\frac{\partial^{m-1}}{\partial a_0^{m-1} }(a_n^d)  \\
	
	\frac{\partial^{m-1}}{\partial a_0^{m-2}\partial a_1}(a_0^d)&	\frac{\partial^{m-1}}{\partial a_0^{m-2}\partial a_1 }(a_0^{d-1}a_1) &	\frac{\partial^{m-1}}{\partial a_0^{m-2}\partial a_1 }(a_0^{d-1}a_2) &
	\dots&
	\frac{\partial^{m-1}}{\partial a_0^{m-2}\partial a_1 }(a_{n-1}a_n^{d-1}) &
	\frac{\partial^{m-1}}{\partial a_0^{m-2}\partial a_1 }(a_n^d) \\
	\vdots&\vdots&\vdots& &\vdots&\vdots\\
	\frac{\partial^{m-1}}{\partial a_n^{m-1}}(a_0^d)&	\frac{\partial^{m-1}}{\partial a_n^{m-1} }(a_0^{d-1}a_1) &	\frac{\partial^{m-1}}{\partial a_n^{m-1} }(a_0^{d-1}a_2) &
	\dots&
	\frac{\partial^{m-1}}{\partial a_n^{m-1} }(a_{n-1}a_n^{d-1}) &
	\frac{\partial^{m-1}}{\partial a_n^{m-1}}(a_n^d)
	\\
	p_{10}^d& p_{10}^{d-1}p_{11} & p_{10}^{d-1}p_{12}&\ldots&p_{1n-1}p_{1n}^{d-1}&p_{1n}^{d}\\
	\vdots&\vdots&\vdots& &\vdots&\vdots\\
	p_{r0}^d& p_{r0}^{d-1}p_{r1} & p_{r0}^{d-1}p_{r2}&\ldots&p_{rn-1}p_{rn}^{d-1}&p_{rn}^{d}\\
	x_{0}^d& x_0^{d-1}x_1 & x_0^{d-1}x_2&\ldots&x_{n-1}x_n^{d-1}&x_n^{d}
	\end{array}\right]
	$$}

Now we  characterize the set $V$.

\begin{proposition}\label{pierwsze} Let $d,m, r$, $Z$ and $M$ be as above.
Let  $F(\a0n,\x0n):=\det( M (\a0n,\x0n))$.
Then either $F$ vanishes identically, or $F$ is bihomogeneous with respect to $(\a0n)$ and $(\x0n)$ with bidegree 
	$$\left(\binom{m+n-1}{n}(d-m+1),d\right).$$
In either case we have
 $$V=V(F)=\{(\a0n,\x0n)\ :\ F(\a0n,\x0n)=0 \}.$$
\end{proposition}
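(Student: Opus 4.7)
The plan is to dissect the three blocks of rows of $M$ separately to pin down the bidegree, and then to recognize $F$ as the determinant of a (square) coefficient matrix for the linear system of vanishing conditions cutting out $V$. For the bidegree, observe that each entry in the first $\binom{m+n-1}{n}$ rows is an $(m-1)$-st order partial derivative of a degree-$d$ monomial in $(\a0n)$, and so is either zero or a monomial of degree $d-m+1$ in $(\a0n)$, with no dependence on $(\x0n)$. The middle $r$ rows $w(P_i)$ consist entirely of scalars. The last row $w(\x0n)$ contains monomials of degree $d$ in $(\x0n)$, independent of $(\a0n)$. Expanding $\det(M)$ by the Leibniz formula, every non-vanishing term picks one entry from each row, contributing a total $a$-degree of $\binom{m+n-1}{n}(d-m+1)$ and a total $x$-degree of $d$. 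Hence either $F$ is identically zero, or $F$ is bihomogeneous of bidegree $\bigl(\binom{m+n-1}{n}(d-m+1),\, d\bigr)$, as claimed.

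Next I identify $V$ with $V(F)$. Write an arbitrary form in $[\mathbb{F}[\x0n]]_d$ as $f = \langle c, w \rangle$ for a coefficient vector $c$ indexed by the degree-$d$ monomials. A short computation (linearity of differentiation and evaluation) shows that the product $M(B,P)\cdot c^T$ has as its coordinates precisely the values $\partial^I f(B)$ for $|I|=m-1$, followed by $f(P_1),\ldots, f(P_r)$, and finally $f(P)$. The only substantive step is the observation that for a homogeneous $f$ of degree $d$, vanishing of all $(m-1)$-st order partials at $B$ is equivalent to $f \in I(mB)$. This follows by a descending induction using Euler's identity $(d-k)\,\partial^I f = \sum_j a_j \, \partial_j\partial^I f$ applied for $k=m-2, m-3, \ldots, 0$: evaluating at $B=(\a0n)$, the right-hand side vanishes by the inductive hypothesis on partials of order $k+1$, and since (\ref{dim}) forces $d\geq m$ one has $d-k>0$, so $\partial^I f(B)=0$ as well.

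With this identification in hand, the condition $(B,P)\in V$ says exactly that the system $M(B,P)\cdot c^T = 0$ has a nontrivial solution $c$. Since (\ref{dim}) makes $M$ a square matrix of size $\binom{d+n}{n}$, this is equivalent to $\det M(B,P) = F(B,P) = 0$. This gives $V = V(F)$ in both alternatives of the statement: when $F\not\equiv 0$ the variety is a proper hypersurface in $\PP^n\times\PP^n$, and when $F\equiv 0$ every $M(B,P)$ is singular, so $V = \PP^n\times\PP^n$, which is also $V(F)$.

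The main (really, the only) obstacle is the Euler-identity reduction in the second paragraph; this is the one place where the homogeneity of $f$ is essential, and it is what lets the first block of $M$ record only the top-order partials without losing information about the lower-order ones. Everything else is bookkeeping on the block structure of $M$ together with the square-matrix dimension count provided by (\ref{dim}).
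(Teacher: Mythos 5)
Your argument is correct and follows the paper's approach: express the vanishing conditions as a square linear system $M\Theta=0$ and invoke the square-matrix dimension count (\ref{dim}) to identify $V$ with $V(\det M)$. You are more explicit than the paper on two points the paper leaves implicit: the bidegree computation via the Leibniz expansion over the block structure of $M$, and the Euler-identity reduction showing that vanishing of only the order-$(m-1)$ partials at $B$ already forces $f\in I(mB)$ for homogeneous $f$.
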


\begin{proof} First, we note that an example of the above matrix is given in Example \ref*{exb3r}, and having a look at it may make following the proof easier. 
	
Let $w_j$ denote the $j$th entry of $w$. Take the equation $f=\sum_j \theta_jw_j=0$, 
where $\theta_j\in \mathbb{F}$ and $\Theta=(\theta_1,\dots,\theta_{\binom{d+n}{n}})$,  
of a hypersurface $C$ of degree $d$. The conditions for 
$C$ to vanish with multiplicity $m$ at a point $B=(\a0n)$, to pass through $Z$ and 
$P=(\x0n)$ are given by (differentiating if necessary and) evaluating $f$ at 
coordinates of these points. This leads to a system of linear equations represented by $M$.
Thus $C$  exists if and only if there exists  a non-zero solution  of the system  
$M\Theta=0$. Since $M$ is a square matrix, by (\ref{dim}), this is equivalent to $\det M=0$.  
\end{proof}

\begin{remark}\label{morethanone}
Fix a point $B=(\b0n)$. We observe that $F(B,\x0n) \equiv 0$ if and only if there exists more than one hypersurface of degree $d$
in $I(mB)\cap I(Z)$.
\end{remark}

\begin{remark}
Fix a point $B=(\b0n)$ such that $F_B = F(B,\x0n) \not\equiv 0$. Then by Remark \ref{morethanone}, $F_B=0$ defines a unique hypersurface of degree $d$ in $I(mB)\cap I(Z)$. Indeed, by definition it describes the locus of points $S$, such that there exists a form in $I(mB)\cap I(Z)\cap I(S)$ of degree $d$. So, in fact,
it describes points lying on the hypersurface defined by this form.
\end{remark}

\begin{remark}
Fix a point $S=(\y0n)$. Observe that $F(\a0n,S) \equiv 0$ if and only if for each point $B$ 
there exists a nonzero form of degree $d$ in $I(mB)\cap I(Z)\cap I(S)$. 
It is reasonable to consider two separate cases: either (a) for each $B$ there is a positive 
dimensional family of hypersurfaces defined by forms in $I(mB)\cap I(Z)$ of degree $d$, 
hence there is one passing through $S$, or (b) for a general $B$ there is a unique form in 
$I(mB)\cap I(Z)$ of degree $d$ but this unique form, which depends on $B$, nevertheless always vanishes at $S$. 
In both cases we have $F(\a0n,S) \equiv 0$ for each $S \in Z$, but in the second case, if the set
$$Z'=\{ S \in \PP^n : F(\a0n,S) \equiv 0 \}$$
is strictly larger than $Z$, then we say $Z'$ admits an ``unexpected'' hypersurface of degree $d$.
\end{remark}

\begin{remark}
Fix a point $S=(\y0n)$ such that $F_S = F(\a0n,S) \not\equiv 0$. Then $F_S=0$ 
describes the locus of such points $B$, for which a hypersurface in $I(mB)\cap I(Z)$ of degree $\binom{m+n-1}{n}(d-m+1)$
passes through a fixed $S$. This hypersurface is at the center of our research.
\end{remark}

\section{Fixing a point in the first set of variables}

Proposition \ref{m-at-B}
follows from Proposition \ref{pierwsze}; however, we present here a direct proof, with 
an argument which we will use often in the sequel, without referring to a system of linear equations.

\begin{proposition}\label{m-at-B}
Let $F$ be as in Proposition \ref{pierwsze} and
fix a point $B=(\b0n)$. Assume that  $F(\b0n,\x0n)\not\equiv 0$. Then $F(B,\x0n)=0$ gives an equation of a hypersurface $C$ (in variables $\x0n$) with $Z\subset C$ and for which $B$ is a point of multiplicity at least $m$.
\end{proposition}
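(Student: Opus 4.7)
The plan is to exploit the determinantal structure of $F$ directly, through multilinearity of $\det$ in the last row (the only one depending on $x$) together with row-coincidence arguments, and to reduce each obstruction to a form of the Euler identity.

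First I would observe that $F(B,\x0n)$ is homogeneous of degree $d$ in $\x0n$: expanding $\det M$ along the last row $w(\x0n)$ writes $F(B,\cdot)$ as a linear combination of the degree-$d$ monomials $w_j$ with coefficients depending only on $B$, so $F(B,\cdot)=0$ defines a hypersurface $C$ of degree $d$ (nonempty by the assumption $F(B,\cdot)\not\equiv 0$). For the inclusion $Z\subset C$, I would substitute $\x0n=\p0n{}_{\!\!i}$ into the last row for each $P_i\in Z$: this last row then coincides with the row $w(P_i)$ already sitting in $M$, giving two equal rows and forcing $F(B,P_i)=0$.

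The real content is showing that $B$ is a point of multiplicity at least $m$ on $C$, i.e.\ that $\partial^{k}F(B,\x0n)/\partial x^{\alpha}$ vanishes at $\x0n=B$ for every multi-index $\alpha$ with $|\alpha|=k<m$. Since only the last row of $M$ depends on $x$, by multilinearity this partial derivative equals the determinant of the matrix $M$ with its last row replaced by $\partial^{k}w(\x0n)/\partial x^{\alpha}$; evaluating at $x=B$ replaces this row by $(\partial^{k}w/\partial a^{\alpha})(B)$, since $w$ has the same functional form in either variable set.

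When $k=m-1$, this replaced row is literally one of the first $\binom{m+n-1}{n}$ rows of $M$ after setting $a=B$, so the determinant vanishes by equality of two rows. The case $k<m-1$ is the main obstacle, and the trick here is the Euler identity applied iteratively. Each entry of $w$ is homogeneous of degree $d$, so $\partial^{k}w/\partial a^{\alpha}$ is homogeneous of degree $d-k$, and Euler's identity gives
\[
(d-k)\,\frac{\partial^{k}w}{\partial a^{\alpha}}(a)
=\sum_{i=0}^{n} a_{i}\,\frac{\partial^{k+1}w}{\partial a^{\alpha+e_{i}}}(a).
\]
Iterating this $m-1-k$ times expresses $(\partial^{k}w/\partial a^{\alpha})(a)$ as an $a$-polynomial linear combination of the vectors $\{(\partial^{m-1}w/\partial a^{\beta})(a):|\beta|=m-1\}$. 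Evaluating at $a=B$, the replaced last row becomes a scalar linear combination of the first $\binom{m+n-1}{n}$ rows of $M|_{a=B}$, so the determinant again vanishes. This closes all cases $k<m$ and proves that $B$ is a point of multiplicity at least $m$ on $C$.
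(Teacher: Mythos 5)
Your proof is correct and follows essentially the same skeleton as the paper's: both exploit multilinearity of the determinant in rows, note that only the last row depends on $\x0n$, and at $\x0n=B$ compare the differentiated last row to the order-$(m-1)$ partials that sit in the top of $M$. The one difference is how the orders $k<m-1$ are handled. The paper computes only the partials of order exactly $m-1$, where the replaced last row literally coincides with one of the top rows; it then leaves implicit that for a homogeneous polynomial in characteristic $0$, vanishing of all order-$(m-1)$ partials at a nonzero point forces vanishing of all lower-order partials (Euler descent, using $d-m+2>0$). You instead treat all $k\le m-1$ directly, iterating Euler's identity ``upward'' to write $(\partial^k w/\partial a^\alpha)(B)$ as a scalar linear combination of the rows $(\partial^{m-1}w/\partial a^\beta)(B)$, so the replaced row is linearly dependent on the top block and the determinant vanishes. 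Both routes need $d\ge m$ so the Euler factors $d-k,\dots,d-m+2$ are nonzero, and both use characteristic zero. Your version is a bit more explicit; the paper's is a bit more economical by reducing to a single order via homogeneity. Either is fine.
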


\begin{proof} 	
We observe that $F(B,P_i)=0$, as it is the determinant of the matrix $M$ with $w(P_i)$ in the last row (we substitute $p_{i0},\dots,p_{in}$ for $\x0n$), so two rows repeat and the determinant is indeed zero. Thus $Z\subset C$.
	
To see that $F(B,\x0n)$ vanishes with multiplicity at least $m$ at $B$ we have to compute all partial derivatives of $F(B,\x0n)$ of order $m-1$, and check they are zero at $P$. Note that $F(B,\x0n)$ is a polynomial in $\mathbb{F}[\x0n]$, derivatives are taken with respect to $\x0n$ and evaluating at $B$ is given by substituting $b_j$ for $x_j$.
	
As determinants are multi-linear functions on rows of a matrix, the partial derivative of the determinant is the sum of the determinants of matrices with appropriate derivatives of rows taken. Thus for a first order partial differential operator $\partial$ we have
\begin{equation}\label{multidet}
\partial \det \left[ \begin{array}{c} \text{row}_1 \\ \text{row}_2 \\ \text{row}_3 \\ \vdots \end{array}\right] =
\det \left[ \begin{array}{c} \partial\text{row}_1 \\ \text{row}_2 \\ \text{row}_3 \\ \vdots  \end{array}\right] +
\det \left[ \begin{array}{c} \text{row}_1 \\ \partial\text{row}_2 \\ \text{row}_3 \\ \vdots \end{array}\right] + 
\det \left[ \begin{array}{c} \text{row}_1 \\ \text{row}_2 \\ \partial\text{row}_3 \\ \vdots \end{array}\right] + \cdots .
\end{equation}
An analogous formula may be written for higher order partial differential operators.

Note that if we take any derivative (with respect to $\x0n$) of any row of the 
matrix except the last one, we get a row of zeros (since $B$ and $P_i$ are fixed). 
Thus, the only possibility for getting a nonzero term is to take all the required 
partial derivatives from the last row. But then, substituting $(\b0n)$ for $(\x0n)$ the row we get 
is exactly one from the first $\binom{m+n-1}{n}$ rows, so the determinant is  zero.  
\end{proof}

We now want to rewrite the matrix $M$. Let $\partial_i$ denote $\partial/\partial_{a_i}$.
The top $\binom{m+n-1}{n}$ rows of $M$ are precisely the order $m-1$ partials
$\partial_{i_{m-1}}\cdots\partial_{i_1}w(a_0,\dots,a_n)$ of the vector $w(a_0,\dots,a_n)$ of monomials
of degree $d$ in the $a_i$. We may assume, for simplicity, that in every case we write $\partial_{i_{m-1}}\cdots\partial_{i_1}$ 
in such a way that all instances of the partial $\partial_0$ which occur in $\partial_{i_{m-1}}\cdots\partial_{i_1}$ come at the left
(and so are applied last). 

Suppose $\partial$ represents a given partial of order $m-2$,
so $\partial_i\partial$ has order $m-1$ for each $i$ and thus $\partial_i\partial w(a_0,\dots,a_n)$ corresponds to some row 
of $M$. Denote this row by $R_i$. Note by Euler's identity that $a_0^{-1}\sum_i a_iR_i=a_0^{-1}l\partial w(a_0,\dots,a_n)$,
where $l=d-(m-2)$ is the degree of the nonzero entries of $\partial w(a_0,\dots,a_n)$.
Thus in $M$ if we replace $R_0$ by $a_0^{-1}\sum_i a_iR_i=a_0^{-1}l\partial w(a_0,\dots,a_n)$, we do not change 
the determinant, since this merely adds multiples of other rows to $R_0$. 
Proceeding in this way, for each row of $M$ which involves the partial $\partial_0$,
we can replace one occurrence of $\partial_0$ by a factor of $a_0^{-1}$ times an appropriate nonzero scalar, reducing by 1
the number of occurrences of $\partial_0$ in each row in which $\partial_0$ occurs.
We can repeat this procedure until no row involves $\partial_0$,
thereby obtaining the matrix $\hat{M}$, where each occurrence of $\partial_0$ has been replaced 
by a factor of $a_0^{-1}$ and where $c\det(\hat{M})=\det(M)$ for a nonzero scalar $c$ coming 
from the degrees which occur in Euler's identity.
By specifying a particular ordering of the rows of $M$, we may assume that $\hat{M}$ is as given in Figure \ref{Mhat}.

\begin{figure}
$$\hat{M}=\left[\begin{array}{c}
a_0^{-(m-1)}w(a_0,\dots,a_n)\\
a_0^{-(m-2)}\frac{\partial w(a_0,\dots,a_n)}{\partial a_1 } \\
a_0^{-(m-2)}\frac{\partial w(a_0,\dots,a_n)}{\partial a_2 } \\
\vdots\\
a_0^{-(m-2)}\frac{\partial w(a_0,\dots,a_n)}{\partial a_{n} }\\
a_0^{-(m-3)}\frac{\partial^2w(a_0,\dots,a_n)}{\partial a_1^2 } \\
a_0^{-(m-3)}\frac{\partial^2w(a_0,\dots,a_n)}{\partial a_1 \partial a_2} \\
\vdots\\
a_0^{-(m-3)}\frac{\partial^2w(a_0,\dots,a_n)}{\partial a_n^2 } \\
\vdots\\
a_0^0\frac{\partial^{m-1}w(a_0,\dots,a_n)}{\partial a_{1}^{m-1} } \\
\vdots\\
a_0^0\frac{\partial^{m-1}w(a_0,\dots,a_n)}{\partial a_{n}^{m-1} } \\
w(P_1)\\
\vdots\\
w(P_r)\\
w(\x0n)
\end{array}\right].
$$
\caption{A particular choice of ordering the rows of $\hat{M}$.}
\label{Mhat}
\end{figure}

\begin{theorem}\label{m-at-Z-and_Pv1}
Let $F=\det(M)$.
Fix a point $S=(\y0n)$ in $\PP^n$. Assume that $F(\a0n,S)\not\equiv 0$. 
Then $F(\a0n,S)=0$ gives a hypersurface of degree  $\binom{m+n-1}{n}(d-m+1)$ 
which vanishes with multiplicity at least $m$ at $S$ and at each point of $Z=P_1+\dots+P_r$.
\end{theorem}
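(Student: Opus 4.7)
The first assertion, that $F(\a0n,S)$ has degree $\binom{m+n-1}{n}(d-m+1)$ in $\a0n$, is immediate from Proposition \ref{pierwsze}: fixing the second group of variables in a bihomogeneous polynomial yields a form of the prescribed first degree.

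For the multiplicity statements, my plan is to work with $\hat M$ (whose determinant agrees with $\det M$ up to a nonzero scalar) and introduce a Taylor-style row operation. Let $Q$ denote either $S$ or one of the points $P_i$. After a change of homogeneous coordinates on the first factor $\PP^n$ if necessary, I may assume $Q$ has nonzero zeroth coordinate, and work in the affine chart $a_0=1$. Set $v(u):=w(1,u_1,\dots,u_n)$ and write $a=(1,\,y+t)$, where $y$ is the affine form of $Q$ and $t=(t_1,\dots,t_n)$ is a local coordinate about $Q$. In this chart the top $N=\binom{m+n-1}{n}$ rows of $\hat M$ become $\partial^{\beta} v(y+t)$, ranging over multi-indices $\beta$ with $|\beta|\le m-1$; the row attached to $Q$ (the last row if $Q=S$, or the appropriate $w(P_i)$ row if $Q=P_i$) is the constant vector $v(y)=w(Q)$.

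The key ingredient is the Taylor identity
\[
 v(y) \;=\; \sum_{|\delta|\le d} \frac{(-t)^{\delta}}{\delta!}\,\partial^{\delta} v(y+t),
\]
which is a finite polynomial identity since the entries of $v$ are polynomials of degree at most $d$. Truncating at $|\delta|=m$ gives
\[
 v(y)\;-\;\sum_{|\delta|\le m-1}\frac{(-t)^{\delta}}{\delta!}\,\partial^{\delta} v(y+t)\;=\;\sum_{|\delta|\ge m}\frac{(-t)^{\delta}}{\delta!}\,\partial^{\delta} v(y+t),
\]
whose right-hand side lies entrywise in the ideal $(t_1,\dots,t_n)^m$, since each summand carries a factor $(-t)^{\delta}$ with $|\delta|\ge m$. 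The sum being subtracted on the left is a polynomial-in-$t$ combination of the top $N$ rows of $\hat M$, so substituting the right-hand side for the row $v(y)$ is a legitimate row operation and does not change $\det \hat M$. Multilinearity of the determinant in that single row then forces $\det \hat M$ itself to lie in $(t_1,\dots,t_n)^m$, so $F$ vanishes to order at least $m$ at $a=Q$.

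Applying this argument with $Q=S$ gives the multiplicity-$m$ statement at $S$, and running it separately for each $Q=P_i$ takes care of multiplicity $m$ at every point of $Z$. The main obstacle is recognizing the correct row operation; once the Taylor identity is on the table, the combinatorics match perfectly the row structure of $\hat M$ (whose top rows are indexed precisely by $|\beta|\le m-1$), and the verification is routine.
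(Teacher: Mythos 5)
Your argument is correct, and it reaches the conclusion by a genuinely different and arguably cleaner route than the paper's. The paper proves the multiplicity bound by directly applying partial derivatives of order $\leq m-1$ to $\det(\overline{M})$ via the multilinearity formula \eqref{multidet}, then tracking an inductive bookkeeping on the orders $\lambda_j$ to show that every surviving summand still has $w(1,a_1,\dots,a_n)$ as its top row, which evaluates to a duplicate of $w(P_1)$. Your proof sidesteps this orbit of case analysis by a single row operation: after centering at $Q$ via $a=(1,y+t)$, the Taylor identity lets you rewrite the constant row $v(y)=w(Q)$ modulo $\mathbb{F}[t]$-combinations of the top $\binom{m+n-1}{n}$ rows (which are exactly $\partial^\beta v(y+t)$, $|\beta|\leq m-1$), and the remainder lands entrywise in $(t_1,\dots,t_n)^m$. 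Multilinearity in that one row then forces $\det\overline{M}\in(t)^m$. Both proofs crucially exploit the same structural fact about $\hat{M}$ — that its top block is the full set of partials of $v$ of order at most $m-1$ — but your packaging via Taylor's theorem replaces the paper's inductive argument on orders of differentiation by a one-shot algebraic identity, making the source of the vanishing (the Taylor remainder) much more transparent. Your treatment of the degree claim (citing Proposition \ref{pierwsze}) is exactly what the paper does, and your remark that the argument applies uniformly to $Q=S$ and each $Q=P_i$ because all these points contribute a constant row $w(Q)$ to the matrix mirrors the paper's observation that the proof is symmetric among those rows.
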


\begin{proof}
Note that $F(\a0n,S)$ is the determinant of a matrix having one row each of the form 
$w(P_1),\dots,w(P_r),w(S)$, and otherwise whose rows are independent of
$P_1,\dots,P_r$ and $S$. So the proof for each $P_i$ and for $S$ is the same.
Thus it is enough to prove it for $P_1$. After a change of coordinates if need be,
we can assume that the $a_0$ coordinate for each of the points $P_1,\dots,P_r$ and $S$
is nonzero.
 
Also note that $F$ is bihomogeneous, hence gives a form defined on $\PP^n\times \PP^n$.
It suffices to show that $F\in I(P_1\times \PP^n)^m$. This will follow if, regarding
$F$ as being in $\CC[a_0,\dots,a_n][x_0,\dots,x_n]$, we show that
the coefficients of $F$, being in $\CC[a_0,\dots,a_n]$,
are in $I(P_1)^m=I(P_1)^{(m)}$, and this can be checked by showing that
all partials of $F(a_0,\dots,a_n)$ in the variables $a_i$ of order exactly $m-1$ vanish at $P_1=(1,p_{11},\dots,p_{1n})$.
And this vanishing holds if and only if all partials of $F(1,a_1,\dots,a_n)$ of order at most $m-1$ 
in the variables $a_1,\dots,a_n$ vanish at $(p_{11},\dots,p_{1n})$. 

Since, apart from a nonzero scalar factor, $M$ and $\hat{M}$ have the same determinant,
we may replace $F$ by $\det(\hat{M})$. In addition, let $\overline{M}$ be the matrix obtained from 
$\hat{M}$ by setting $a_0=1$. Then we must show all partials in $a_1,\dots,a_n$ 
of $F(1,a_1,\dots,a_n)=\det(\overline{M})$ of order at most $m-1$ vanish at $(p_{11},\dots,p_{1n})$.
We will hereafter set  $F=F(1,a_1,\dots,a_n)$ and $P_1=(p_{11},\dots,p_{1n})$.
Note that $F(P_1)=0$ since $F$ is the determinant of a matrix two of whose rows are $w(P_1)$.

It will be convenient to keep track of the order of the partials occurring in various rows of $\overline{M}$.
Let $\lambda_j$ be the order of the partial occurring in row $j$ of $\overline{M}$ for relevant rows of
$\overline{M}$ (so we restrict $j$ to $1\leq j\leq \binom{d+n}{n}-(r+1)$).
Thus $\lambda_1=0$, $\lambda_j=1$ for $2\leq j\leq n+1$, etc.

Consider any partial $\partial_{i_1}$, $i_1>0$, applied to $F$. Using $F=\det(\overline{M})$ and Equation \eqref{multidet}, 
we see that $\partial_{i_1} F$ is a sum of determinants of the matrices obtained by
applying $\partial_{i_1}$ to successive rows of $\overline{M}$. The bottom $r+1$ rows of $\overline{M}$ do not involve
any of the variables $a_1,\dots,a_n$ and so applying $\partial_{i_1}$ to such a row 
gives a zero row, and hence a zero determinant.
For a row $j$ with $1\leq j\leq \binom{d+n}{n}-(r+1)$ for which $\lambda_j<m-1$, applying
$\partial_{i_1}$ to that row gives a row which already occurs further down in $\overline{M}$, and thus again
the determinant of that matrix is 0. Thus the only rows to which we apply $\partial_{i_1}$
for which the determinant might not be 0 are rows $j$ with $\lambda_j=m-1$. 

Now consider $\partial_{i_2} \partial_{i_1} F$, so we must apply $\partial_{i_2}$ to matrices to which we already
applied $\partial_{i_1}$. By the same argument, the only rows $j$ for which the determinant 
of one of these matrices might not be 0
are rows with $\lambda_j$ equal to $m-2$ or $m-1$. This is because applying $\partial_{i_2}$ to a row $j$ with $\lambda_j<m-2$,  
gives a result which already occurs as a row $j'>j$ with $\lambda_{j'}=\lambda_j+1\leq m-2$, and thus 
the resulting matrix has determinant 0. (If we apply $\partial_{i_2}$ to a row with $\lambda_j=m-1$,
the result does not occur further down the matrix because $m-1$ is the maximum order of the partials
in $\overline{M}$, and if we apply $\partial_{i_2}$ to a row with $\lambda_j=m-2$,
the result need not occur further down the matrix because the result we get is a partial of order $m-1$
and some of the partials of this order already in the matrix got changed when we applied $\partial_{i_1}$.)

Continuing in this way we see for $k\leq m-1$, if $i_l>0$ for all $l$, that $\partial_{i_k}\cdots\partial_{i_1} F$ 
is a sum of determinants of matrices obtained by
applying the partials $\partial_{i_l}$ to rows $j$ of $\overline{M}$ with $\lambda_j\geq m-k>0$.
Thus each matrix has $w(1,a_1,\dots,a_n)$ as its first row. Thus its determinant evaluated at $P_1$ is 0,
since $w(P_1)$ is also a row of the matrix, further down.
Thus all partials of $F$ of order at most $m-1$ in $a_1,\dots,a_n$ evaluated at $P_1$ vanish.
\end{proof}

\section{BMSS duality}

\begin{theorem}\label{tangent}
Let $d, m$ and $F(a_0,\dots,a_n,x_0,\dots,x_n)$ be as in Proposition \ref{pierwsze}, let $B=(b_0,\dots,b_n)\in\PP^n$ and define
homogeneous polynomials in $\mathbb{F}[\x0n]$ by setting $F_L := F(B,\x0n)$ and $F_R := F(\x0n,B)$.
If $F_L(\x0n) \not \equiv 0$,
then each partial derivative of $F_L$ of order $m$ at $B$ is equal to $(-1)^m$ times the same partial of $F_R$ at $B$.
\end{theorem}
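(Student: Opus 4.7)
The plan is to compute the Taylor expansions of $F_L$ and $F_R$ about the point $B$ through order $m$ and read off the identity by comparing coefficients; the sign $(-1)^m$ will emerge automatically from one of the two computations.

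The key input is an iterated form of Euler's identity. Since every component of $w$ is a homogeneous polynomial of degree $d$, Euler's formula gives $\sum_i Q_i\partial_i w(Q) = d\cdot w(Q)$ at any point $Q$. Applying Euler repeatedly to the vectors $\partial^\alpha w$ (each homogeneous of degree $d-|\alpha|$), one deduces that for any point $Q$ and any multi-index $\alpha$ with $|\alpha|\leq m-1$, the vector $\partial^\alpha w(Q)$ lies in the linear span (with coefficients polynomial in $Q$) of $\{\partial^\beta w(Q):|\beta|=m-1\}$. By multilinearity and alternation of the determinant, placing any vector from this span into a row of a matrix whose other rows already include all the $\partial^{\beta_i} w(Q)$ produces a zero determinant.

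I apply this first to $F_L(\x0n)=\det(M(B,\x0n))$, where only the bottom row $w(\x0n)$ depends on $\x0n$. The Taylor expansion $w(\x0n) = \sum_\alpha \frac{(x-B)^\alpha}{\alpha!}\partial^\alpha w(B)$ (with multi-index convention $(x-B)^\alpha := \prod_i(x_i-b_i)^{\alpha_i}$), combined with the Euler observation at $Q=B$, kills the summands with $|\alpha|\leq m-1$. Writing $v_\alpha:=\partial^\alpha w(B)$ and $D_\alpha:=\det(v_{\beta_i};\,w(P_j);\,v_\alpha)$, this gives
\[ F_L(\x0n) = \sum_{|\alpha|\geq m}\frac{(x-B)^\alpha}{\alpha!}\,D_\alpha, \]
so the order-$m$ Taylor part of $F_L$ at $B$ is $\sum_{|\alpha|=m}\frac{(x-B)^\alpha}{\alpha!}D_\alpha$. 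For $F_R(\x0n)=\det(M(\x0n,B))$ the dependence on $\x0n$ is now in the top $N$ rows $\partial^{\beta_i} w(\x0n)$, while the last row is the constant $w(B)$. The crucial move is to Taylor-expand this constant row $w(B)$ about the \emph{variable} point $\x0n$, rather than about $B$, giving
\[ w(B) = \sum_\alpha \frac{(-1)^{|\alpha|}(x-B)^\alpha}{\alpha!}\,\partial^\alpha w(\x0n). \]
Now the Euler observation with $Q=\x0n$ annihilates the summands with $|\alpha|\leq m-1$, and one obtains
\[ F_R(\x0n) = \sum_{|\alpha|\geq m}\frac{(-1)^{|\alpha|}(x-B)^\alpha}{\alpha!}\,D_\alpha(\x0n), \]
where $D_\alpha(\x0n):=\det(\partial^{\beta_i}w(\x0n);\,w(P_j);\,\partial^\alpha w(\x0n))$ satisfies $D_\alpha(B)=D_\alpha$.

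Extracting order-$m$ Taylor parts at $B$: in the expansion of $F_R$, summands with $|\alpha|>m$ contribute only in orders $>m$ in $(x-B)$, while each summand with $|\alpha|=m$ contributes $(-1)^m\frac{(x-B)^\alpha}{\alpha!}D_\alpha$ since $D_\alpha(\x0n)$ specializes to $D_\alpha$ at $\x0n=B$. Thus the order-$m$ part of $F_R$ at $B$ is exactly $(-1)^m$ times the corresponding part of $F_L$, and comparing coefficients of $(x-B)^\gamma$ for each $|\gamma|=m$ yields $\partial^\gamma F_L(B) = (-1)^m \partial^\gamma F_R(B)$. The only delicate step is recognising that one must expand the constant row $w(B)$ about the variable $\x0n$, not about $B$, so that Euler's identity applies to the resulting partials of $w$; once this is done, the sign $(-1)^m$ falls out automatically from $(B-x)^\alpha = (-1)^{|\alpha|}(x-B)^\alpha$.
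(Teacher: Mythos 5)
Your proof is correct, and it takes a genuinely different route from the paper's. The paper first Laplace-expands $\det M$ along the $r$ rows indexed by the points $P_i$, reducing everything to $J\times J$ Vandermonde-type submatrices $S$; it then introduces an auxiliary $\ZZ^{n+1}$-grading on a larger polynomial ring to show each cofactor $D_k$ is a monomial $C_k a^{A-\alpha_k}$ in the $a_i$, and finally proves a separate coefficient lemma $\sum_k C_k\,\alpha_k^t=0$ for $|t|\le m-1$ (Lemma \ref{ck}) by which it recognizes the $m$-th partials in $x$ and $a$ as agreeing up to $(-1)^m$ after some falling-factorial bookkeeping. Your argument sidesteps all of that: you work with the full matrix $M$, Taylor-expand the unique $x$-dependent row of $F_L$ about $B$ and the constant row $w(B)$ of $F_R$ about the \emph{variable} point $\x0n$, and let multilinearity plus alternation of the determinant absorb the rest. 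The common engine underneath both proofs is Euler's identity, which guarantees that any $\partial^\alpha w(Q)$ with $|\alpha|\le m-1$ lies in the row span of $\{\partial^\beta w(Q):|\beta|=m-1\}$ and hence annihilates the low-order terms of either Taylor expansion — the paper invokes exactly this fact inside the proof of Lemma \ref{ck}, and you invoke it directly. The tradeoff: the paper's computation hands you explicit information about the cofactors $D_k$ (notably that they are monomials in the $a_i$), which is useful context for understanding the structure of $F$; your version is shorter, more conceptual, makes the sign $(-1)^m$ appear for a transparent reason ($(B-x)^\alpha=(-1)^{|\alpha|}(x-B)^\alpha$), and in fact never uses the hypothesis $F_L\not\equiv 0$, so it quietly proves the identity for all $B$.
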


\begin{corollary}\label{BMSSCor}
Let $d, m$, $F(a_0,\dots,a_n,x_0,\dots,x_n)$, $B$, $F_L(\x0n)$ and $F_L(\x0n)$ be as in Theorem \ref{tangent}.
If $F_L(\x0n)$ has multiplicity exactly $m$ at $B$, then 
$F_L$ and $F_R$ both have multiplicity $m$ at $B$ and the tangent cones of $F_L$ and $F_R$ are equal at $B$. 
\end{corollary}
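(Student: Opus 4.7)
The plan is to deduce the corollary almost directly from Theorem \ref{tangent}, combined with Theorem \ref{m-at-Z-and_Pv1} applied to the specialization $S=B$. Recall that the tangent cone of a polynomial $f(\x0n)$ at the point $B$ is the lowest-order nonzero homogeneous piece of the Taylor expansion of $f$ about $B$; when $f$ vanishes to order exactly $\mu$ at $B$ this piece is
\[
\sum_{|\alpha|=\mu}\frac{1}{\alpha!}\,\partial^{\alpha}f(B)\,(x-B)^{\alpha},
\]
and so the whole statement reduces to comparing the partials of $F_L$ and $F_R$ of order $m$ at $B$.

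First I would verify that $F_R$ also has multiplicity exactly $m$ at $B$. The hypothesis that $F_L$ has multiplicity exactly $m$ at $B$ gives a multi-index $\alpha$ with $|\alpha|=m$ and $\partial^{\alpha}F_L(B)\neq 0$. By Theorem \ref{tangent}, $\partial^{\alpha}F_R(B)=(-1)^m\partial^{\alpha}F_L(B)$, which is nonzero; in particular $F_R\not\equiv 0$ and the multiplicity of $F_R$ at $B$ is at most $m$. Since $F_R(\x0n)=F(\x0n,B)$ is exactly $F(\a0n,S)$ specialized at $S=B$, and we have just checked $F_R\not\equiv 0$, Theorem \ref{m-at-Z-and_Pv1} applies and gives multiplicity at least $m$ for $F_R$ at the point $S=B$. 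The two bounds together yield multiplicity exactly $m$.

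With both multiplicities equal to $m$, the tangent cones of $F_L$ and $F_R$ at $B$ are given by the displayed formula with $\mu=m$, involving precisely the order-$m$ partials of $F_L$ and $F_R$ at $B$. Applying Theorem \ref{tangent} coefficient-by-coefficient shows that the tangent cone of $F_R$ equals $(-1)^m$ times that of $F_L$; since they differ only by a nonzero scalar, they define the same hypersurface in $\PP^n$, which is the asserted equality of tangent cones. The only subtle point worth flagging is the need to verify $F_R\not\equiv 0$ before invoking Theorem \ref{m-at-Z-and_Pv1}, but this comes for free from the hypothesis on $F_L$ combined with Theorem \ref{tangent}; beyond that, the argument is essentially bookkeeping, with all the real content sitting inside Theorem \ref{tangent}.
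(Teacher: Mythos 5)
Your proof is correct and takes essentially the same route as the paper: multiplicity at least $m$ for $F_R$ via Theorem \ref{m-at-Z-and_Pv1} applied at $S=B$, equality of order-$m$ partials up to a sign of $(-1)^m$ via Theorem \ref{tangent}, and the observation that the tangent cone at a point of multiplicity exactly $m$ is determined by those partials. The one small difference is that you spell out the need to check $F_R\not\equiv 0$ before invoking Theorem \ref{m-at-Z-and_Pv1}, a point the paper glosses over; that check does indeed come for free from the nonvanishing of an order-$m$ partial of $F_L$ together with Theorem \ref{tangent}.
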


\begin{proof}
For each $B$, $F_L(x_0,\dots,x_n)$ has multiplicity at least $m$ at $B$ (see Proposition \ref{pierwsze}).
By Theorem \ref{m-at-Z-and_Pv1}, $F_R(x_0,\dots,x_n)$ also has multiplicity at least $m$ at $B$.
If $F_L(\x0n)$ has multiplicity exactly $m$ at $B$, Theorem \ref{tangent} implies that $F_L$ and $F_R$ both
have multiplicity $m$ at $B$, and moreover that the tangent cones are equal at $B$. 
(The reason that the tangent cones coincide is because, given a homogeneous polynomial $H(\x0n)$,
the tangent cone for $H(\x0n)=0$ at a point $X$ of multiplicity $m$ is given by the equation
$$\sum_{j_0+\dots+j_n=m}\frac{1}{j_0!\cdots j_n!} \frac{\partial^m(H)}{\partial x_0^{j_0} 
\cdots \partial x_n^{j_n}}\left(X\right) x_0^{j_0} \cdots x_n^{j_n}=0.)$$
\end{proof}

\begin{remark}\label{remTC}
The fact that $F_L(\x0n)$ and $F_R(\x0n)$ have the same tangent cone at $B$ is a version of
what has come to be known as BMSS duality. The fact that there was a connection between $F_L(\x0n)$ and $F_R(\x0n)$
at $B$ was first remarked on by \cite{BMSS} in the case of the unexpected curve discussed in Example 
\ref{exb3r}. The paper \cite{H-M-N-T} showed that the connection was in terms of tangent cones
and that it held for many cases of unexpected plane curves and for at least some cases of unexpected hypersurfaces
in $\PP^n$, but \cite{H-M-N-T} 
established this fact by a fairly ad hoc proof.
Our result here is more general and depends on fundamental properties of determinants.
One thing that \cite{H-M-N-T} did that we do not do in our
more general setting is to determine the degree of the factor of $F(a_0,\dots,a_n,x_0,\dots,x_n)$
involving just the variables $a_i$. Our Theorem \ref{k-mult} does however partially address this;
see Example \ref{exb3r}.
\end{remark}

\begin{proof}[Proof of Theorem \ref{tangent}]
Recall that $F=\det(M)$ where

{\tiny	$$
	M=\left[\begin{array}{cccccc}
		\frac{\partial^{m-1}}{\partial a_0^{m-1} }(a_0^d)&	\frac{\partial^{m-1}}{\partial a_0^{m-1} }(a_0^{d-1}a_1) &	\frac{\partial^{m-1}}{\partial a_0^{m-1} }(a_0^{d-1}a_2) &
		\dots&
		\frac{\partial^{m-1}}{\partial a_0^{m-1} }(a_{n-1}a_n^{d-1}) &
		\frac{\partial^{m-1}}{\partial a_0^{m-1} }(a_n^d)  \\
		
		\frac{\partial^{m-1}}{\partial a_0^{m-2}\partial a_1}(a_0^d)&	\frac{\partial^{m-1}}{\partial a_0^{m-2}\partial a_1 }(a_0^{d-1}a_1) &	\frac{\partial^{m-1}}{\partial a_0^{m-2}\partial a_1 }(a_0^{d-1}a_2) &
		\dots&
		\frac{\partial^{m-1}}{\partial a_0^{m-2}\partial a_1 }(a_{n-1}a_n^{d-1}) &
		\frac{\partial^{m-1}}{\partial a_0^{m-2}\partial a_1 }(a_n^d) \\
		\vdots&\vdots&\vdots& &\vdots&\vdots\\
		\frac{\partial^{m-1}}{\partial a_n^{m-1}}(a_0^d)&	\frac{\partial^{m-1}}{\partial a_n^{m-1} }(a_0^{d-1}a_1) &	\frac{\partial^{m-1}}{\partial a_n^{m-1} }(a_0^{d-1}a_2) &
		\dots&
		\frac{\partial^{m-1}}{\partial a_n^{m-1} }(a_{n-1}a_n^{d-1}) &
		\frac{\partial^{m-1}}{\partial a_n^{m-1}}(a_n^d)
		\\
		p_{10}^d& p_{10}^{d-1}p_{11} & p_{10}^{d-1}p_{12}&\ldots&p_{1n-1}p_{1n}^{d-1}&p_{1n}^{d}\\
		\vdots&\vdots&\vdots& &\vdots&\vdots\\
		p_{r0}^d& p_{r0}^{d-1}p_{r1} & p_{r0}^{d-1}p_{r2}&\ldots&p_{rn-1}p_{rn}^{d-1}&p_{rn}^{d}\\
		x_{0}^d& x_0^{d-1}x_1 & x_0^{d-1}x_2&\ldots&x_{n-1}x_n^{d-1}&x_n^{d}
	\end{array}\right].
	$$}
 
We can compute $\det(M)$ using successive Laplace expansion along the rows
whose entries come from evaluating at the points $P_i$ (hence rows 2 through $r+1$, counting up
from the bottom). This reduces computing $\det(M)$ to computing 
determinants of $J\times J$ submatrices $S$ (where $J=\binom{n+m-1}{n}-r$), as below (and summing after multiplying 
them by a constant coming from entries in rows 2 through $r+1$, from the bottom):
 
 {\tiny	$$S=
 	\left[\begin{array}{cccc}
 		\frac{\partial^{m-1}}{\partial a_0^{m-1}}(a_{0}^{\alpha_{1,0}}a_1^{\alpha_{1,1}}\cdots a_n^{\alpha_{1,n}})&	\frac{\partial^{m-1}}{\partial a_0^{m-1} }(a_{0}^{\alpha_{2,0}}a_1^{\alpha_{2,1}}\cdots a_n^{\alpha_{2,n}}) &
 	\dots&\frac{\partial^{m-1}}{\partial a_0^{m-1}}(a_{0}^{\alpha_{n,0}}a_1^{\alpha_{n,1}}\cdots a_n^{\alpha_{n,n}})
 	 \\ 	
 	\frac{\partial^{m-1}}{\partial a_0^{m-2}\partial a_1}(a_{0}^{\alpha_{1,0}}a_1^{\alpha_{1,1}}\cdots a_n^{\alpha_{1,n}})&	\frac{\partial^{m-1}}{\partial a_0^{m-2}\partial a_1}(a_{0}^{\alpha_{2,0}}a_1^{\alpha_{2,1}}\cdots a_n^{\alpha_{2,n}}) &	
 	\ldots&\frac{\partial^{m-1}}{\partial a_0^{m-2}\partial a_1}(a_{0}^{\alpha_{n,0}}a_1^{\alpha_{n,1}}\cdots a_n^{\alpha_{n,n}}
  \\
 	\vdots &\vdots&\vdots&\vdots
 	\\
 		\frac{\partial^{m-1}}{\partial a_n^{m-1}}(a_{0}^{\alpha_{1,0}}a_1^{\alpha_{1,1}}\cdots a_n^{\alpha_{1,n}})&	\frac{\partial^{m-1}}{\partial a_n^{m-1}}(a_{0}^{\alpha_{2,0}}a_1^{\alpha_{2,1}}\cdots a_n^{\alpha_{2,n}})&
 		\ldots&
 			\frac{\partial^{m-1}}{\partial a_n^{m-1}}(a_{0}^{\alpha_{n,0}}a_1^{\alpha_{n,1}}\cdots a_n^{\alpha_{n,n}})
 	\\
 	x_{0}^{\alpha_{1,0}}x_1^{\alpha_{1,1}}\cdots x_n^{\alpha_{1,n}}& x_{0}^{\alpha_{2,0}}x_1^{\alpha_{2,1}}\cdots x_n^{\alpha_{2,n}} & \ldots &	x_{0}^{\alpha_{n,0}}x_1^{\alpha_{n,1}}\cdots x_n^{\alpha_{n,n}}
 	\end{array}\right].
 	$$}

\noindent So in the last row we have $J$ distinct monomials in $\x0n$ of degree $d$
and in the rows above we have all partials of order $m-1$ for the corresponding monomials in the variables $\a0n$. 
Thus the entries in the first $\binom{n+m-1}{n}$ rows of $S$ are of the form
$$\frac{\partial^{m-1}}{\partial a_0^{t_0}\cdots\partial a_n^{t_n}}(a_{0}^{\alpha_{k,0}}a_1^{\alpha_{k,1}}\cdots a_n^{\alpha_{k,n}})=
\Big(\Pi_{i=0}^n\binom{\alpha_{k,i}}{t_i}t_i!\Big)a_0^{\alpha_{k,0}-t_0}a_1^{\alpha_{k,1}-t_1}\cdots a_n^{\alpha_{k,n}-t_n}$$
where $t_0+\cdots+t_n=m-1$. This specific expression is the entry of the matrix $S$ in the $k$th column 
of the row corresponding to the partial $\frac{\partial^{m-1}}{\partial a_0^{t_0}\cdots\partial a_n^{t_n}}$.

Thus it is enough for $t_0+\dots+t_n=m$ to show that 
$$(-1)^m\Big(\frac{\partial^{m-1}}{\partial a_0^{t_0}\cdots\partial a_n^{t_n}}(\det S)\Big)(b_0,\dots,b_n,b_0,\dots,b_n)=$$
$$\Big(\frac{\partial^{m-1}}{\partial x_0^{t_0}\cdots\partial x_n^{t_n}}(\det S)\Big)(b_0,\dots,b_n,b_0,\dots,b_n).$$

This will take several steps. Expanding $\det S$ along the last row, we get 
$$\det S=D(\x0n,\a0n)= \sum_{k=1}^Jx_0^{\alpha_{k,0}}x_1^{\alpha_{k,1}}\cdots x_n^{\alpha_{k,n}} D_k(\a0n),$$
where $D_k(\a0n)$ is the cofactor of the entry $x_{0}^{\alpha_{k,0}}x_1^{\alpha_{k,1}}\cdots x_n^{\alpha_{k,n}}$ in $S$.

To compute $D_k$ we define a multigrading on $\mathbb{F}[\x0n,\a0n,\gamma_0,\dots,\gamma_n]$ by $\deg(\gamma_j)=\deg(x_j)=\deg(a_j)=(0,\dots,0,1,0,\dots,0)$, where the $1$ is in the $(j+1)$st position. 
We can now take $\widetilde{S}$ to be the matrix obtained from $S$ by using $ \gamma_0,\dots,\gamma_n$ to
homogenize the entries of $S$ involving partials. For example, 
$$\frac{\partial^{m-1}}{\partial a_0^{t_0}\cdots\partial a_n^{t_n}}(a_{0}^{\alpha_{k,0}}a_1^{\alpha_{k,1}}\cdots a_n^{\alpha_{k,n}})$$
in $S$ changes in $\widetilde{S}$ to
$$\frac{\partial^{m-1}}{\partial a_0^{t_0}\cdots\partial a_n^{t_n}}(	a_{0}^{\alpha_{k,0}}a_1^{\alpha_{k,1}}\cdots a_n^{\alpha_{k,n}})\gamma_0^{t_0}\cdots\gamma_n^{t_n}.$$

Observe that each nonconstant entry in column $k$ in $\widetilde{S}$ is 
multihomogeneous of multidegree $(\alpha_{k,0},\dots,\alpha_{k,n})$, 
hence $\det \widetilde{S}$ is multihomogeneous of multidegree 
$(\widetilde{A_1},\widetilde{A_2},\dots,\widetilde{A_n})$ where $\widetilde{A_c}=\sum_j\alpha_{j,c}$. 
We also see that we obtain $\widetilde{S}$ from $S$ by multiplying each row of $S$ 
corresponding to a partial $\frac{\partial^{m-1}}{\partial a_0^{t_0}\cdots\partial a_n^{t_n}}$
by the monomial $\gamma_0^{t_0}\cdots\gamma_n^{t_n}$.
Thus $\det \widetilde{S}$ is obtained by multiplying $\det S$ by the product
of all monomials $\gamma_0^{t_0}\cdots\gamma_n^{t_n}$ of degree $m-1$,
which is just $(\gamma_0\cdots\gamma_n)^t$, where $t=\binom{n+m-1}{n}\frac{m-1}{n+1}$.
Therefore $\det S$ is multihomogeneous of multidegree $(A_1,\dots,A_n)$, where 
 $A_c=\widetilde{A_c}-t$ for $c=0,\dots,n$.
 Consequently 
 $$D_k(\a0n)=C_ka_0^{A_0-\alpha_{k,0}}\cdots a_n^{A_n-\alpha_{k,n}}.$$
 
We will use the following Lemma:
 
\begin{lemma}\label{ck}
Given the coefficients $C_k$ and exponents $\alpha_{k,i}$ as above, we have
\begin{equation}\label{numera}
\sum_{k=1}^JC_k\alpha_{k,0}^{t_0}\alpha_{k,1}^{t_1}\cdots\alpha_{k,n}^{t_n}=0,
\end{equation}
for every $t_0+\dots+t_n\leq m-1$.
\end{lemma}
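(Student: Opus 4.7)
The plan is to apply the partial differential operator $\partial^T_x := \partial_{x_0}^{t_0}\cdots\partial_{x_n}^{t_n}$ to $\det S$ for each multi-index $T=(t_0,\ldots,t_n)$ with $|T| := t_0+\cdots+t_n \leq m-1$, evaluate at $x_i=a_i$, and compute the result in two different ways. Using the explicit expansion $\det S = \sum_{k=1}^J C_k\, a_0^{A_0-\alpha_{k,0}}\cdots a_n^{A_n-\alpha_{k,n}}\, x_0^{\alpha_{k,0}}\cdots x_n^{\alpha_{k,n}}$, direct differentiation of this polynomial in $\x0n$ immediately yields
$$\partial^T_x(\det S)\big|_{x_i=a_i} = a_0^{A_0-t_0}\cdots a_n^{A_n-t_n}\sum_{k=1}^J C_k\prod_{i=0}^n \alpha_{k,i}^{(t_i)},$$
where $\alpha^{(t)} := \alpha(\alpha-1)\cdots(\alpha-t+1)$ denotes the falling factorial.

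On the other hand, since only the last row of $S$ depends on $\x0n$, multilinearity of the determinant shows that $\partial^T_x(\det S)$ equals the determinant of the matrix $S'$ obtained from $S$ by replacing its last row with $\partial^T_x$ of that row. Evaluating at $x_i=a_i$ transforms this new last row into $\partial_{a_0}^{t_0}\cdots\partial_{a_n}^{t_n}\, w(\a0n)$. When $|T|=m-1$, this row is literally one of the top $\binom{m+n-1}{n}$ rows of $S$, so $\det S' = 0$ by a repeated row. When $|T|<m-1$, I apply Euler's identity iteratively: each entry of $\partial_{a_0}^{t_0}\cdots\partial_{a_n}^{t_n}\, w(\a0n)$ is homogeneous of degree $d-|T|$, so
$$(d-|T|)\,\partial_{a_0}^{t_0}\cdots\partial_{a_n}^{t_n}\, w(\a0n) = \sum_{i=0}^n a_i\cdot\partial_{a_i}\partial_{a_0}^{t_0}\cdots\partial_{a_n}^{t_n}\, w(\a0n),$$
expressing the replacement row as an $\a0n$-linear combination of rows of partial order $|T|+1$. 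Since $d\geq m$ and the characteristic is zero, the scalars $d-k$ are nonzero for $0\leq k\leq m-2$, so iterating until order $m-1$ is reached realizes the replacement row as an $\a0n$-linear combination of the top $\binom{m+n-1}{n}$ rows of $S$, forcing $\det S'=0$.

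Combining the two computations gives $\sum_k C_k\prod_i \alpha_{k,i}^{(t_i)} = 0$ for every $|T|\leq m-1$ (the monomial $a_0^{A_0-t_0}\cdots a_n^{A_n-t_n}$ is a legitimate nonzero polynomial whenever any summand $\prod_i \alpha_{k,i}^{(t_i)}$ is nonzero, since then $A_i\geq\alpha_{k,i}\geq t_i$, so it can be cancelled). To pass from falling factorials to ordinary powers, I use that in each single variable, $\alpha_i^{t_i}$ is an $\mathbb{F}$-linear combination of the falling factorials $\alpha_i^{(s_i)}$ with $s_i\leq t_i$ (via Stirling numbers of the second kind); hence $\prod_i \alpha_{k,i}^{t_i}$ is an $\mathbb{F}$-linear combination of products $\prod_i\alpha_{k,i}^{(s_i)}$ with $|S|\leq |T|\leq m-1$, and the desired identity $\sum_k C_k\prod_i\alpha_{k,i}^{t_i}=0$ follows term by term. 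The main obstacle is the Euler reduction step, which requires careful bookkeeping to confirm that the iterated expression really lies in the row span of $S$ and that the relevant degree scalars $d-k$ remain nonzero in characteristic zero.
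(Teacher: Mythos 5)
Your proof is correct and follows essentially the same route as the paper: differentiate $\det S$ with respect to the $x$-variables, evaluate at $x=a$, and observe the result vanishes because the last row becomes (directly for $|T|=m-1$, or via iterated Euler for $|T|<m-1$) dependent on the top rows of $S$; then convert from falling factorials to ordinary powers. The only cosmetic difference is in the conversion step: you invoke the Stirling-number change of basis $\alpha^{t}=\sum_{s\le t}S(t,s)\,\alpha^{(s)}$, which lets you conclude in one shot, whereas the paper expands the falling factorials into powers and runs an induction on $|T|$ — the two are equivalent, your version being slightly more direct.
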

 
\begin{proof}[Proof of the lemma]
Observe that each partial of order $m-1$ (with respect to the variables $\x0n$) of $\det S$ evaluates to 0 at the point $(\a0n)$, 
since in the matrix for $S$ we differentiate the last row, which after substituting $\a0n$ in for $\x0n$ 
becomes equal to a row higher up in $S$, so the determinant is zero. Moreover, if we compute a partial of $\det S$ of order 
$k<m-1$ and evaluate at $(\a0n)$, the last row will (by Euler's identity) be
a linear combination of the rows higher up in $S$ corresponding to the partials of order $m-1$, 
hence the matrix will not have maximal rank so again
will have determinant 0. 

Thus from $\det S=\sum_{k=1}^Jx_0^{\alpha_{k,0}}x_1^{\alpha_{k,1}}\cdots x_n^{\alpha_{k,n}} D_k(\a0n)$,
using $h=t_0+\dots+t_n$ for $0\leq h\leq m-1$, we have
$$0=\Big(\frac{\partial^{h}}{\partial x_0^{t_0}\ldots\partial x_n^{t_n}}(\det S)\Big)(\a0n,\a0n)=$$
$$\sum_{k=1}^J\Big(\Pi_{i=0}^n\binom{\alpha_{k,i}}{t_i}t_i!\Big)a_0^{\alpha_{k,0}-t_0}a_1^{\alpha_{k,1}-t_1}\cdots a_n^{\alpha_{k,n}-t_n}
D_k(a_0,\dots,a_n)=$$
$$\sum_{k=1}^J\Big(\Pi_{i=0}^n\binom{\alpha_{k,i}}{t_i}t_i!\Big)a_0^{\alpha_{k,0}-t_0}a_1^{\alpha_{k,1}-t_1}\cdots a_n^{\alpha_{k,n}-t_n}
C_ka_0^{A_0-\alpha_{k,0}}\cdots a_n^{A_n-\alpha_{k,n}}=$$
$$\sum_{k=1}^JC_k\Big(\Pi_{i=0}^n\binom{\alpha_{k,i}}{t_i}t_i!\Big)a_0^{A_0-t_0}a_1^{A_1-t_1}\cdots a_n^{A_n-t_n},$$
where we interpret $\binom{\alpha_{k,i}}{t_i}$ to be 0 if $t_i>\alpha_{k,i}$.
Factoring out $a_0^{A_0-t_0}a_1^{A_1-t_1}\cdots a_n^{A_n-t_n}$ gives
$$\sum_{k=1}^JC_k\Big(\Pi_{i=0}^n\binom{\alpha_{k,i}}{t_i}t_i!\Big)=0.$$

Taking $h=0$, i.e.  $t_0=t_1=\cdots=t_n=0$, we get
$$\sum_{k=1}^JC_k=0.$$
Taking $h=1$ and $t_i=0$ except for $t_j=1$, we get 
$$\sum_{k=1}^JC_k\alpha_{k,j}=0.$$
Taking $h=2$ and $t_i=0$ except for $t_i=t_j=1$ for $i\neq j$, we get 
$$\sum_{k=1}^JC_k\alpha_{k,i}\alpha_{k,j}=0$$
while for $t_i=0$ except for $t_j=2$, we get
$$\sum_{k=1}^JC_k\alpha_{k,j}(\alpha_{k,j}-1)=0,$$
and so on.

Let us proceed by induction. We have (\ref{numera}) for $t_0+\dots+t_n=0$.
Suppose we have it for all $s_0+\dots+s_n=\ell-1< m-1$
and we wish to show it for some $t_0+\dots+t_n=\ell$.
Expanding $\sum_{k=1}^JC_k\Big(\Pi_{i=0}^n\binom{\alpha_{k,i}}{t_i}t_i!\Big)=0$ and
multiplying out gives (for appropriate indexed coefficients $b$ and $\beta$)
$$0=\sum_kC_k\big(\alpha_{k,0}(\alpha_{k,0}-1)\cdots(\alpha_{k,0}-(t_0-1))\big)\cdots\big(\alpha_{k,n}(\alpha_{k,n}-1)\cdots(\alpha_{k,n}-(t_n-1))\big)=$$
$$\sum_kC_k\big(\alpha_{k,0}^{t_0}+b_{1,0}\alpha_{k,0}^{t_0-1}+\dots+b_{t_0,0}\alpha_{k,0}\big)
\cdots\big(\alpha_{k,n}^{t_n}+b_{1,n}\alpha_{k,n}^{t_n-1}+\dots+b_{t_n,n}\alpha_{k,n}\big)=$$
$$\sum_kC_k\alpha_{k,0}^{t_0}\cdots\alpha_{k,n}^{t_n}+
\sum_{1\leq s_0+\dots+s_n<\ell}\beta_{0,\dots,n}\sum_kC_k\alpha_{k,0}^{s_0}\cdots\alpha_{k,n}^{s_n}=
\sum_kC_k\alpha_{k,0}^{t_0}\cdots\alpha_{k,n}^{t_n}.$$
\end{proof}

We now return to the proof of Theorem \ref{tangent}.
For $t_0+\dots+t_n=m$ we get 
$$\Big(\frac{\partial^{h}}{\partial x_0^{t_0}\ldots\partial x_n^{t_n}}(\det S)\Big)(b_0,\dots,b_n,b_0,\dots,b_n)=$$
$$\sum_{k=1}^JC_k\Big(\Pi_{i=0}^n\binom{\alpha_{k,i}}{t_i}t_i!\Big)b_0^{A_0-t_0}b_1^{A_1-t_1}\cdots b_n^{A_n-t_n}=$$
$$ \sum_{k=1}^JC_k\Pi_{i=1}^n\big(\alpha_{k,i}(\alpha_{k,i}-1)\cdots (\alpha_{k,i}-(t_i-1))\big)b_0^{A_0-t_0}b_1^{A_1-t_1}\cdots b_n^{A_n-t_n}=$$
$$\sum_{k=1}^JC_k\alpha_{k,0}^{t_0}\alpha_{k,1}^{t_1}\cdots \alpha_{k,n}^{t_n}b_0^{A_0-t_0}b_1^{A_1-t_1}\cdots b_n^{A_n-t_n},$$
where the last equality comes from 
multiplying out (as at the end of the proof of Lemma \ref{ck}) and using Lemma \ref{ck} to eliminate sums  
$\sum_kC_k\alpha_{k,0}^{s_0}\cdots \alpha_{k,n}^{s_n}$ with $s_0+\cdots+s_n < m$.

On the other hand, using
$$\det S= \sum_{k=1}^JC_kx_0^{\alpha_{k,0}}x_1^{\alpha_{k,1}}\cdots x_n^{\alpha_{k,n}} a_0^{A_0-\alpha_{k,0}}\cdots a_n^{A_n-\alpha_{k,n}}$$
we also have, in a similar way,
$$\Big(\frac{\partial^{h}}{\partial a_0^{t_0}\ldots\partial a_n^{t_n}}(\det S)\Big)(b_0,\dots,b_n,b_0,\dots,b_n)=$$
$$ \sum_{k=1}^JC_k\Pi_{i=1}^n\big((A_i-\alpha_{k,i})(A_i-\alpha_{k,i}-1)\cdots(A_i-\alpha_{k,i}-t_i+1)\big)b_1^{A_1-t_1}\cdots b_n^{A_n-t_n}=$$
$$(-1)^m\sum_{k=1}^JC_k\alpha_{k,0}^{t_0}\alpha_{k,1}^{t_1}\cdots \alpha_{k,n}^{t_n}b_1^{A_1-t_1}\cdots b_n^{A_n-t_n}.$$
\end{proof}
 
 \section{A refinement of Theorem \ref{m-at-Z-and_Pv1}}
We now present a refinement of Theorem \ref{m-at-Z-and_Pv1}
which allows us to understand factors of $F$. As  
the example from the introduction shows (see Example \ref{exb3r}),
$F(a_0,\dots,a_n,x_0,\dots,x_n)$ need not have factors involving only the variables $x_i$ but it
can have factors that involve only the variables $a_i$. 
Thinking of $F$ as defining a hypersurface in $\PP^n\times\PP^n$,
a factor that involves only the variables $a_i$
defines a locus of points $B$ such that $F\in I(B\times\PP^n)$.
If the factor occurs with multiplicity $k$, then $F\in I(B\times\PP^n)^k$.
Thus to understand the occurrence of such factors, it is useful to understand
under what circumstances there are points $B$ such that
$F\in I(B\times\PP^n)^k$. This is what we do in Theorem \ref{k-mult}.
As an added benefit, we can also use Theorem \ref{k-mult}
to get a short proof of Theorem \ref{m-at-Z-and_Pv1}.

To set up the statement of the theorem,
let $F=\det M$, where $M$ is as in Proposition \ref{pierwsze}, and let $L_j$  be the 
vector space of forms $[I(jB)\cap I(Z)]_d$ of degree $d$ vanishing at a point $B=(b_0, b_1, \dots, b_n)\in\PP^n$ with multiplicity $j$ 
and vanishing on $Z=P_1+\dots+P_r$ for distinct points $P_i$. We define $k_j=\max(0,\dim L_j-\binom{n+j-1}{n}-r)$;
thus when $B\not\in Z$, $k_j$ is the superabundance of the space $L_j$
(meaning $k_j$ is the actual dimension of $L_j$ minus the expected dimension of $L_j$).
Finally we recall that the ideal $I(B)$ of the point $B\in\PP^n$ is generated by 
the $2\times 2$ minors of the matrix
$$\left[\begin{array}{cccc}
b_0& b_1& \ldots& b_n\\
a_0 &a_1 &\ldots&a_n
\end{array}\right].$$
In $\mathbb{F}[\a0n,\x0n]$ the $2\times 2$ minors generate the ideal of $B\times \PP^n$.

\begin{theorem}\label{k-mult}
In the context of the preceding paragraph we have
	$$F\in I(B\times \PP^n)^k\subset \mathbb{F}[\a0n,\x0n]$$
where $k:=\sum_{j=1}^mk_j$.
\end{theorem}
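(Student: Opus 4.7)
The plan is to reformulate $F\in I(B\times\PP^n)^k$ as a vanishing statement for mixed partials. After a linear change of coordinates I may assume $B=(1,0,\dots,0)$, so $I(B\times\PP^n)$ is generated by $a_1,\ldots,a_n$; setting $a_0=1$, it suffices to show that every mixed partial $\partial_{a_1}^{\beta_1}\cdots\partial_{a_n}^{\beta_n} F(1,0,\dots,0,\x0n)$ of total order $|\beta|<k$ vanishes as a polynomial in $\x0n$. I work throughout with the matrix $\overline{M}$ built before Theorem~\ref{m-at-Z-and_Pv1}, whose determinant equals $\det M$ up to a nonzero scalar, and whose ``$B$-rows'' are stratified into levels $s\in\{0,1,\ldots,m-1\}$, where level $s$ consists of the partials of order exactly $s$ (not involving $\partial_0$) applied to $\tilde{w}(a_1,\dots,a_n):=w(1,a_1,\dots,a_n)$.

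The core mechanism is the same multilinearity argument that drives Theorem~\ref{m-at-Z-and_Pv1}: expanding $\partial^{\beta}\det\overline{M}$ via Leibniz yields a signed sum over distributions of the $|\beta|$ derivatives among the $B$-rows, and a term can contribute nonzero only if each touched level-$s$ row receives at least $m-s$ derivatives (otherwise the lifted row duplicates, up to scalar, a row already present in $\overline{M}$, forcing the determinant to vanish). Consequently, after $|\beta|=k'$ derivatives, only rows at original level $\ge m-k'$ can be touched, and the rows at levels $0,1,\ldots,m-k'-1$ remain present in every contributing summand.

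The information carried by $k_j$ enters through the submatrix $N_j(B)$ formed by the rows at levels $0,\ldots,j-1$ together with the $r$ rows $w(P_i)$: this block has $\binom{n+j-1}{n}+r$ rows, and its rank deficiency encodes (via the definition of $k_j$ in terms of $\dim L_j$) the existence of $k_j$ independent linear relations among those rows. Evaluating at $a_1=\cdots=a_n=0$, each $B$-row collapses to a single nonzero entry in the column indexed by its multi-index, so a Laplace expansion along these distinguished columns reduces each contributing summand to a scalar times an $(r+1)\times(r+1)$ determinant drawn from the $Z$-rows, the $x$-row, and the complementary columns. The $k_j$ row relations then propagate through the Laplace expansion to force $k_j$ vanishing conditions on the resulting determinants, provided the $|\beta|=k'$ derivatives have not yet been used to escape past level $j-1$.

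The main obstacle, and the technical heart of the argument, is to show that these level-wise contributions combine multiplicatively, so the total vanishing order is $\sum_{j=1}^m k_j$ rather than some smaller combination. I would organize this by induction on $m$: peel off the level-$(m-1)$ rows of $\overline{M}$, apply an inductive form of the theorem to the resulting smaller matrix, and isolate the $k_m$ contribution at the top separately, grouping the remaining contributions by the same analysis one level down. Once this bookkeeping is complete, specializing $B$ to a point of $Z$ (respectively of $Z\cup\{S\}$) should recover Theorem~\ref{m-at-Z-and_Pv1} as the ``short proof'' promised before the statement, since the inclusion forces each relevant $k_j$ to contribute at least $1$ and hence $k\ge m$.
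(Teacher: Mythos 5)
Your central mechanism is incorrect, and the place where you concede the argument still needs work is exactly where the paper's real content lives, so the proposal has a genuine gap on both counts.

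The claim that ``a term can contribute nonzero only if each touched level-$s$ row receives at least $m-s$ derivatives'' is false. The justification you give (the lifted row duplicates a row already present in $\overline{M}$) ignores that the would-be duplicate row may itself have been touched by one of the other derivatives, in which case it is no longer present in the matrix. Concretely, take $m=3$, $n=2$ and apply $\partial_1$ three times: hit $\partial_1^2 w$ (level $2$), then $\partial_1 w$ (level $1$), then $w$ (level $0$). After these three operations the six derivative rows are $\partial_1 w,\partial_1^2 w,\partial_2 w,\partial_1^3 w,\partial_1\partial_2 w,\partial_2^2 w$ --- all distinct, no duplicates, nonzero in general --- even though the level-$0$ row received only one derivative, not $m-0=3$. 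This cascading phenomenon is precisely the subtlety the paper flags in the proof of Theorem~\ref{m-at-Z-and_Pv1} (``the result need not occur further down the matrix because \dots some of the partials of this order already in the matrix got changed''), and any correct argument has to accommodate it rather than rule it out. The correct constraint is an ordered, collective one (after $\ell$ partials, only rows of order $\ge m-\ell$ can have been altered), not the per-row quota you state.

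Beyond that, you explicitly defer the ``main obstacle'': showing the level-wise contributions add up to $\sum_j k_j$. That accounting is the theorem. The paper does it by arguing contrapositively: assume some partial of order $\tau\le t-1$ of $\det\overline{M}$ is nonzero at $B$, record in a vector $u=(u_i)$ how many derivatives land on each row, and then inductively build a family of sub-vectors $u^j\le u$ supported on the rows of the truncated matrix $K_j$ with $\sum_i u_i^j=k_1+\dots+k_j$ and $u_i^j+d_i^j\le j$. The inequality $u_i^j+d_i^j\le j$ is what guarantees that applying the derivatives recorded in $u^{j-1}$ to $K_j$ keeps every altered row equal to a row already lower down in $K_j$, so the rank deficiency $k_j$ of $K_j(B)$ survives; maximal rank of the final matrix then forces $u$ to exceed $u^{j-1}$ in at least $k_j$ more positions. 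Summing over $j$ gives $\tau\ge k$. Your outline (peel off level $m-1$, induct on $m$, somehow isolate a $k_m$ contribution) is not incompatible in spirit, but it is not carried out, and your Laplace-expansion/``single nonzero entry per column'' observation after normalizing $B=(1,0,\dots,0)$, while genuinely nice, is orthogonal to the actual difficulty: it simplifies individual summands but does not by itself control which derivative distributions can produce full-rank matrices, which is the thing that must be tied to the $k_j$. As written, the proposal does not constitute a proof.
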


\begin{proof} 
As in the proof of Theorem \ref{m-at-Z-and_Pv1}, it is enough to show that the partials
of $F=\det(\overline{M})$ of order at most $k-1$ in the variables $a_i$, $i>0$, vanish at $B$,
where we assume that $b_0\neq 0$.
For this proof we obtain a matrix $K$ from
$\overline{M}$ by removing the last row of $\overline{M}$, so
$$K=\left[\begin{array}{c}
        w(1,a_1,\dots,a_n) \\
\frac{\partial w(1,a_1,\dots,a_n)}{\partial a_1 } \\
	\frac{\partial w(1,a_1,\dots,a_n)}{\partial a_2 } \\
	\vdots\\
	\frac{\partial w(1,a_1,\dots,a_n)}{\partial a_n }\\
	\frac{\partial^2w(1,a_1,\dots,a_n)}{\partial a_1^2 } \\
	\vdots\\
	\frac{\partial^{m-1}w(1,a_1,\dots,a_n)}{\partial a_n^{m-1}} \\
        w(P_1) \\
        \vdots \\
        w(P_r) \\
	\end{array}\right].
	$$
Observe that $\det\overline{M}=0$ at $(B,\x0n)$ if and only if $\overline{M}$ has 
deficient rank at $(B,\x0n)$ if and only if $K$ has deficient rank at $B$. 
	
We will argue contrapositively, so assume for some $t$ that
\begin{equation}\label{tw7a}
F\notin  I(B\times \PP^n)^t.
\end{equation}
Obviously 
\begin{equation}\label{tw7b}
F\not\equiv 0.
\end{equation}
We want to show that $k<t$.

By (\ref{tw7a}) some partial derivative of $F$ of order $\tau\leq t-1$ evaluated at $B$ is non-zero. Thus expanding
the derivative of $\det(\overline{M})$ as in \eqref{multidet} we must get at least one nonzero summand.
This summand is a determinant of a matrix (necessarily of maximal rank), 
which is obtained from $\overline{M}$ by taking $\tau$  derivatives of rows and evaluating at $B$.
Of course applying a derivative to any of the last $r+1$ rows of $\overline{M}$ 
(or equivalently to any of the last $r$ rows in $K$) gives a matrix with deficient rank, so we may assume all the 
derivatives are applied to rows above the last $r$ rows of $K$.
 
 So let us assume that after applying $\tau$ partial derivatives to rows of $K$ 
 (the possibility that more than one derivative is applied to one row is not excluded) and evaluating at $B$ we get
 \begin{equation}\label{tw7c}
 \textrm {a matrix } \widetilde{K}_B \textrm{ with maximal rank.}
 \end{equation}

Let $s_j$ be the number of rows of $K_j$; since $K_m=K$, we have $s_m=\binom{d+n}{n}-1$
and we define $s=s_m$.
Let $d_i^j$ denote how many partials are involved in defining row $i$ of $K_j$
(so the $j$ in $d_i^j$ is an index, not an exponent).
For example, $d_1^j=0$ for all $j$ and also $d_i^j=0$ for $i>s_j-r$ for all $j$, 
$d_i^j=1$ for $2\leq i\leq n+2$ for $j>1$, etc.

Let $u_i$ denote how many of the $\tau$ partials get applied to the $i$th row of $K$. 
Before proceeding observe that our definition of $k_i$ says that the rank of the matrix 
$$
K_1=\left[\begin{array}{c}
w\\
w(P_1)\\
\vdots\\
w(P_r)
\end{array}\right]
$$
evaluated at $B$ is less by $k_1$ than the maximum possible, 
the rank of 
$$
K_2=\left[\begin{array}{c}
w\\
\frac{\partial w}{\partial a_1 } \\
\vdots\\
\frac{\partial w}{\partial a_n }\\
w(P_1)\\
\vdots\\
w(P_r)
\end{array}\right]
$$
evaluated at $B$ is less by $k_2$ than the maximum possible, and so on.  
Note that $K_j$ consists of the rows of $K$ involving partials of order at most $j-1$.

We want to show that $k<\tau$. Thus we must relate $k$ to how many partials we apply to $K$.
But $k=k_1+\dots+k_m$, so our proof will be to relate each $k_j$ to how many partials are applied to
each submatrix $K_j$ of $K$. 
To do that, we will use recursively computed quantities $u_i^j$, which are directly related to the
$k_j$ and give lower bounds for the number of partials applied to row $i$ of $K_j$.
(Thus, as for $d_i^j$, the $j$ in $u_i^j$ is an index and not an exponent.)

{\bf Claim.}
Given $j$, $1\leq j\leq m$, there exists a sequence $u^j=(u_1^j,\dots,u_s^j)$ such that: 
\begin{itemize}
	\item[(i)] $u_i^j=0$ for $i>s_j$;\\
	\item[(ii)] $0\leq u_i^j\leq u_i$;\\
	\item[(iii)] $\sum_{i=1}^s u_i^j =k_1+\dots+k_j$;\\
	\item[(iv)] $u_i^j+d_i^j\leq j$.
\end{itemize}

Suppose we have the claim. Then $k=k_1+\dots+k_m=\sum_i u_i^m\leq \sum_i u_i = \tau\leq t-1$,
as we wanted to show.

We now prove the claim. Claim (i) will hold because $u_i^j$ will be concerned only with rows of $K_j$. 
For the other claims, we proceed by induction. Let $j=1$. If  $k_1\geq 2$ 
 then the rank of $K_1(B)$ is less than maximal by at least $2$, hence the rank of
 
 $$
 \left[\begin{array}{c}
 w(P_1)\\
 \vdots\\
 w(P_r)
 \end{array}\right]
 $$
 is less than maximal, so also the rank of $K$ is less than maximal, contradicting (\ref*{tw7b}).

Thus $k_1=0$ or $1$. If $k_1=0$, take $u^1=(0,\dots,0)$, and this sequence satisfies the Claim for $j=1$.
If $k_1=1$ define  $u^1=(1,0,\dots,0)$.  To prove that $u^1$ satisfies the Claim for $j=1$, observe that 
$u_1\geq 1$, as otherwise none of the partial derivatives are applied to the row $w$, hence it survives 
unchanged and, after evaluating at $B$, the matrix $K_1(B)$ would be a submatrix of 
$\widetilde{K}_B$, contradicting (\ref{tw7c}). 

Suppose now that we have the required $u^{j-1}$ as in the claim; we want to find $u^j$.
Consider the matrix $K_j$. Apply $u_1^{j-1}$ partial derivatives to the first row of
$K_j$ obtaining $K_j'$. Since $u_1^{j-1}=u_1^{j-1}+d_1^{j-1}\leq j-1$ (by Claim (iv)
and the fact that $d_1^{j-1}=0$), we 
get (in the first row of $K_j'$) a partial derivative of $w$ of order at most $j-1$
which thus occurs in some other row of $K_j$ further down, so the rank of $K_j'(B)$ is at most the rank of $K_j(B)$. 
 
We proceed with $u_2^{j-1}$ 
partial derivatives applied to the second row of $K_j'$ and so on. After 
applying all partial derivatives from $u^{j-1}$ we have obtained a matrix $\widetilde{K_j}$, which by the argument
above has
$$\rank \widetilde{K_j}(B)\leq \rank K_j(B).$$
Since $K_j(B)$ is short of maximal rank by $k_j$, so is $\widetilde{K_j}(B)$.
Thus we need to change at least $k_j$ rows of $\widetilde{K_j}(B)$ to bring it to maximal rank.
After applying additional derivatives (if need be) to the rows of $K_j(B)$, coming from the fact
that applying all derivatives described by $u^{j-1}$  
may only be a part of applying all the derivatives described by $u$,
we get a matrix $\widetilde{K_j}(B)'$ which is a submatrix of $\widetilde{K}_B$,
which must therefore have maximal rank. This means that $u$ and $u^{j-1}$
differ in at least $k_j$ positions corresponding to $k_j$ rows of $\widetilde{K_j}(B)$.

Thus at least $k_j$ values among $u_1-u_1^{j-1},\dots,u_{s_j-r}-u_{s_j-r}^{j-1}$ are positive. 
We choose exactly $k_j$ such indices and define   
$$u_i^j=
 \begin{cases} u_i^{j-1}+1& \text{ if $i$ among chosen ones}\\
                     u_i^{j-1}& \text{ otherwise.}
\end{cases}$$
Observe that obviously $0\leq u_i^j\leq u_i$ with $u_i^j=0$ for $i>s_j$, 
and $\sum_i u_i^j=\sum_i u_i^{j-1}+k_j=k_1+\dots +k_j$ by induction.
For $i\leq s_{j-1}-r$ we have $d_i^j=d_i^{j-1}$ so
$u_i^j+d_i^j\leq 1+u_i^{j-1}+ d_i^{j-1}\leq 1+(j-1)=j$, while
for $s_{j-1}-r<i\leq s_j-r$ we have $d_i^j=j-1$ so
$u_i^j+d_i^j\leq 1+(j-1)=j$.
\end{proof}

We now use Theorem \ref{k-mult} to give an alternate proof of Theorem \ref{m-at-Z-and_Pv1}.

\begin{proof}[Proof of Theorem \ref{m-at-Z-and_Pv1}]
The degree of $F(a_0,\dots,a_n,S)$ follows from
Proposition \ref{pierwsze}.
Now let $B=P_1$ (the proof for other $P_i$ is the same) and let $Z=P_1+\cdots+P_r$. So we are interested in
the vector spaces $L_j=[I(jP_1)\cap I(Z)]_d$. Since $P_1$ occurs in $Z$,
we have $[I(jP_1)\cap I(Z)]_d=[I(jP_1)\cap I(P_2+\cdots+P_r)]_d$,
so $\dim [I(jP_1)\cap I(Z)]_d=\dim [I(jP_1)\cap I(P_2+\cdots+P_r)]_d
\geq \binom{d+n}{n}-\binom{n+m-1}{n}-(r-1)>\binom{d+n}{n}-\binom{n+m-1}{n}-r$, hence
so $k_j\geq 1$ for all $j$. Thus $k=k_1+\dots+k_m\geq m$, so 
$F\in I(P_1\times\PP^n)^m$, so $F(a_0,\dots,a_n,S)$ vanishes to order $m$ at $P_1$.
Because of symmetry in the matrix $M$ with $\det M=F$, where $S$ plays the same role
as $P_1$ (both giving rise to a row of $M$), we also have that  
$F(a_0,\dots,a_n,S)$ vanishes to order $m$ at $S$.
\end{proof}
 
\begin{example}\label{exb3r}
Consider Theorem \ref{tangent} in the case of the example from the introduction,
so $d=4$, $m=3$ and $Z = \{ P_1,\dots,P_8 \}$. As in the introduction,
we will use variables $a,b,c$ instead of $a_0,a_1,a_2$, and similarly $x,y,z$ for 
$x_0,x_1,x_2$. For some choice of of the ordering $w$ of the monomials of degree 4
in $a,b,c$, the matrix $M$ has the form
{\tiny $$M=\left[\begin{array}{ccccccccccccccc}
12a^2&0&   0&   6ab&6ac&0&  0&  0&  0&  2bc& 0&   0&   2b^2& 2c^2& 0  \\
0&   0&   0&   3a^2&0&  3b^2&0&  0&  0&  2ac& 2bc& c^2&  4ab& 0&   0  \\
0&   0&   0&   0&  3a^2&0&  0&  3c^2&0&  2ab& b^2&  2bc& 0&   4ac& 0  \\
0&   12b^2&0&   0&  0&  6ab&6bc&0&  0&  0&   2ac& 0&   2a^2& 0&   2c^2\\
0&   0&   0&   0&  0&  0&  3b^2&0&  3c^2&a^2&  2ab& 2ac& 0&   0&   4bc\\
0&   0&   12c^2&0&  0&  0&  0&  6ac&6bc&0&   0&   2ab& 0&   2a^2& 2b^2\\
1&   0&   0&   0&  0&  0&  0&  0&  0&  0&   0&   0&   0&   0&   0  \\
0&   1&   0&   0&  0&  0&  0&  0&  0&  0&   0&   0&   0&   0&   0  \\
0&   0&   1&   0&  0&  0&  0&  0&  0&  0&   0&   0&   0&   0&   0  \\
1&   1&   0&   1&  0&  1&  0&  0&  0&  0&   0&   0&   1&   0&   0  \\
1&   1&   0&   -1& 0&  -1& 0&  0&  0&  0&   0&   0&   1&   0&   0  \\
1&   0&   1&   0&  1&  0&  0&  1&  0&  0&   0&   0&   0&   1&   0  \\
1&   0&   1&   0&  -1& 0&  0&  -1& 0&  0&   0&   0&   0&   1&   0  \\
0&   1&   1&   0&  0&  0&  1&  0&  1&  0&   0&   0&   0&   0&   1  \\
x^4&  y^4&  z^4&  x^3y&x^3z&xy^3&y^3z&xz^3&yz^3&x^2yz&xy^2z&xyz^2&x^2y^2&x^2z^2&y^2z^2
\end{array}\right].$$}

Let $F=\det(M)$. Then (ignoring a constant factor of $-1728$) we have
\begin{multline*}
F=ab^3c^3(a+b-c)(a-b+c)(c^3x^3y-c^3xy^3-b^3x^3z+3ab^2x^2yz-3ac^2x^2yz\\
-3a^2bxy^2z+3bc^2xy^2z+a^3y^3z+3a^2cxyz^2-3b^2cxyz^2+b^3xz^3-a^3yz^3).
\end{multline*}

Theorem \ref{tangent} says at a point $B$ such that $F_L\not\equiv0$ that at $B$ the tangent cones for $F_L$ and $F_R$.
are the same. The hypothesis $F_L\not\equiv0$ is needed as we see from $F$ above.
If $B=(b_0,b_1,b_3)$ with $b_0=0$, then $F_L\equiv0$, but $F_R\not\equiv0$. 
Away from where the factor $ab^3c^3(a+b-c)(a-b+c)$ vanishes we have $F_L\not\equiv0$ and hence
the tangent cones will agree. We note that the linear factors of $ab^3c^3(a+b-c)(a-b+c)$
define lines in Figure \ref{Fig1}: $a=0$ is the dashed line through $P_9$,
$b=0$ and $c=0$ are the dashed lines through $P_1$, and $a+b-c=0$ and $a-b+c)=0$ are the 
solid lines through $P_8$.

We now explain how Theorem \ref{k-mult} helps to see, 
without computing the determinant, how factors like $ab^3c^3(a+b-c)(a-b+c)$ arise.
In fact, it is easy to find factors of $F$ not involving $x,y,z$. For example, the line $a=0$ contains
three points, $P_1$, $P_2$ and $P_8$. Take any point $P$ on this line, but not equal to any of the $P_j$.
Since a line $L$ containing three points and an additional point of multiplicity 3 must be
a component of the quartic, the dimension of the homogeneous component $[I(3P+Z)]_4$ of degree 4 of the ideal $I(3P+Z)$
is equal to the dimension of $[ I(2P+Z')]_3$, where $Z'$ consists of the 5 points of $Z$ not on $L$.
But this dimension is at least 2, so $k_3 \geq 1$, and $F$ must vanish along $\{a=0\} \times \PP^2$.
Similarly we get the multiplicity at least 3 of a factor $b$ and $c$ in $F$. Each of these lines contains 4 points, and computing $k_3$ as before gives $k_3 \geq 2$, computing $k_2$ gives $k_2 \geq 1$ for each point $P$ on any of these lines,
so $F$ vanishes to order at least $k_3+k_2=3$ on $\{b=0\} \times \PP^2$ and on $\{c=0\} \times \PP^2$.

Theorem \ref{m-at-Z-and_Pv1} states that for $S$ (such that $F(a,b,c,S) \not\equiv 0$), the curve given by $F_S(a,b,c) = F(a,b,c,S)$
has multiplicity at least $3$ at $P_1,\dots,P_8$ and $S$. In fact, we can see that this multiplicity is at least $4$ at each $P_j$, given by the constant (i.e. not depending on $S$) factors of $F_S$. At $P_3$ the multiplicity is, in fact, $6$. Observe also that comparing degrees of
components of $F_S$ we get immediately that the degrees must be 1 along lines with three points and 3 along lines with four points --- otherwise we cannot get a triple point at (sufficiently general) $S$.

Factor $ab^3c^3(a+b-c)(a-b+c)$ out of $F_S$ and denote the result by $G_S$. Thus $G_S$ is a polynomial of
degree 3. If $S$ is general enough (not lying on any of the lines defined by the linear factors of 
$ab^3c^3(a+b-c)(a-b+c)$), then $G_S$ must have a triple point at $S$, since (by 
Theorem \ref{m-at-Z-and_Pv1}) $F_S$ has a triple point at $S$. But then $G_S$ splits into three lines, which are tangent lines (at $S$) to a unique quartic passing through $3S$ and $Z$, by Theorem \ref{tangent}. 

We may ask about points $S$ such that $F_S \equiv 0$.
These points are the locus defined by the ideal in $\mathbb{C}[x,y,z]$ 
generated by the coefficients of $F$ regarded as being in $F \in \mathbb{C}[x,y,z][a,b,c]$.
This ideal describes 9 points, obviously including $P_1,\dots,P_8$, but also $P_9$, 
since the example comes from an unexpected curve through the 9 points. In our setting, each expected quartic in 
$3B+P_1+\dots+P_8$ passes through $P_9$, and it is this additional base point which makes them ``unexpected''.
\end{example}

The next example shows that the assumption $F\not\equiv 0$ is stronger that the assumption that the points $P_1,\dots,P_r$ impose independent conditions on hypersurfaces of degree $d$. In other words, rows $w(P_1),\dots,w(P_r)$ of the matrix $M$ can have a maximal rank, but the whole matrix has a deficient rank.
\begin{example}\label{exF4}
	Consider the $24$ points in the projective space $\mathbb{P}^3$, coming from the root system $F_4$, which have the coordinates
	$$(1,1,0,0),\; (1,-1,0,0),\; (1,0,1,0),\; (1,0,-1,0),\; (1,0,0,1),\; (1,0,0,-1),\; (0,1,1,0), $$ 
	$$(0,1,-1,0),\; (0,1,0,1),\; (0,1,0,-1),\; (0,0,1,1),\; (0,0,1,-1),\; (1,0,0,0),\; (0,1,0,0),$$
	$$(0,0,1,0),\; (0,0,0,1),\; (1,1,1,1),\; (1,1,-1,1),\; (1,1,1,-1),\; (1,1,-1,-1),\; (1,-1,1,1),$$
	$$ (1,-1,-1,1),\; (1,-1,1,-1),\; (1,-1,-1,-1).$$
	Denote the set of all these points by $Z$. What was shown by authors in \cite[Section 3.6]{H-M-N-T}, if we take $d=4$ and $m=3$, then for any general point $B$ of multiplicity $3$ we have $\dim[I(3B+Z)]_4=4$, while the expected dimension of this system is $\binom{7}{3}-\binom{5}{2}-23=2.$ The number $23$ is the number of independent conditions imposed by points from $Z$ on the space of quartics. 
	
	We want to proceed in the same way as in the previous example and create the interpolation matrix $M$. This matrix should consists of $35$ rows, where $24$ of them are coming from points, $10$ from partial derivatives and the last one holds all monomials of degree $4$. Since all points in $Z$ do not impose independent conditions on form of degree $4$, we are taking the new set $Z'$ with $23$ points from $Z$ and one additional point, which is general enough, to impose independent condition on the space of quartics. It can be easily checked using any computer algebra software that in this situation $\det(M)=F\equiv 0.$
\end{example}


{\bf Acknowledgements}: {\small Farnik was partially supported by National Science Centre, Poland, grant 2018 /28/C/ST1/00339.
	Harbourne was partially supported by Simons Foundation grant \#524858.
	Malara was partially supported by  National Science Centre, Poland, grant 2016/21/N/ST1/01491,
Szpond was partially supported by National Science  Centre, Poland, grant 2018/30/M/ST1/00148,
 Harbourne and	Tutaj-Gasi\'nska were partially supported by National Science Centre, Poland, grant 2017/26/M/ST1/00707.
	Harbourne and Tutaj-Gasi\'nska thank the Pedagogical University of Cracow, the Jagiellonian University 
	and the University of Nebraska for hosting reciprocal visits by Harbourne and Tutaj-Gasi\'nska
	when some of the work on this paper was done. The authors thank Tomasz Szemberg for many fruitful discussions.}

{\tiny \noindent
	Marcin Dumnicki, Halszka Tutaj-Gasi\'nska\\
	Faculty of Mathematics and Computer Science,	Jagiellonian University\\
	{\L}ojasiewicza 6, PL-30-348 Krak\'ow, Poland\\
	Marcin.Dumnicki@im.uj.edu.pl\\
		Halszka.Tutaj@im.uj.edu.pl
	\\
	\\
	\noindent
{\L}ucja Farnik, Grzegorz Malara, Justyna Szpond\\
 Institute of Mathematics, Pedagogical University Cracow\\
Podchor\c{a}\.zych 2, PL-30-084 Krak\'ow, Poland\\
lucja.farnik@gmail.com\\
	grzegorzmalara@gmail.com\\
	szpond@gmail.com
	\\
	\\
	\noindent
	Brian Harbourne\\	Department of Mathematics,	University of Nebraska\\
	Lincoln, NE 68588-0130 USA\\
	bharbourne1@unl.edu

\vspace{0.5cm}
\noindent
Grzegorz Malara current address: Institute of Mathematics, Polish Academy of Sciences, \\
\'Sniadeckich 8, PL-00-656 Warszawa, Poland
}

\end{document}